\definecolor{purple}{rgb}{0.8,0.12,0.8}
\definecolor{orange}{rgb}{1.0,0.7,0.0}
\definecolor{pink}{rgb}{1,0.5,0.8}
\definecolor{blackg}{rgb}{0.1,0.25,0.1}
\definecolor{ForestGreen}{cmyk}{0.91,0,0.88,0.42}
\definecolor{Turquoise}{cmyk}{0.85,0,0.20,0}
\newcommand{\blambda}{\boldsymbol{\lambda}}
\newcommand{\bnu}{\boldsymbol{\nu}}
\newcommand{\bmu}{\boldsymbol{\mu}}
\newcommand{\el}{\lambda}
\newtheorem{Th}{Theorem}[section]
\newtheorem{Lem}[Th]{Lemma}
\newtheorem{Prop}[Th]{Proposition}
\newtheorem{Def-Prop}[Th]{Definition-Proposition}
\theoremstyle{definition}
\newtheorem{Def}[Th]{Definition}
\theoremstyle{remark}
\newtheorem{Rem}[Th]{Remark}
\newtheorem{abs}[Th]{\bfseries}{\rmfamily}
\title{Schur elements for the Ariki-Koike algebra and applications}
\author{Maria Chlouveraki and Nicolas Jacon}
\thanks{The authors would like to thank I.~Gordon, S.~Griffeth, M.~Fayers and A.~Mathas for useful conversations.  In particular, the first author is indebted to Stephen Griffeth for explaining the results of his paper \cite{DG}, which inspired part of this paper. Maria Chlouveraki gratefully acknowledges the support of the EPSRC through the grant EP/G04984X/1. }
\begin{document}

\begin{abstract}
 We study the Schur elements associated to the simple modules of the Ariki-Koike algebra. We first give a cancellation-free formula for  them so that their factors can be easily read and programmed.  
  We then study direct applications of this result.  We also complete the determination of the canonical basic sets for cyclotomic Hecke algebras of type $G(l,p,n)$ in characteristic $0$.
\end{abstract}

\maketitle
\begin{section}{Introduction}

Schur elements play a powerful role in the representation theory of symmetric algebras.
 In the case of the Ariki-Koike algebra, that is, the Hecke algebra of the complex reflection group $G(l,1,n)$, 
   they  are Laurent polynomials whose factors determine when Specht modules are projective irreducible and whether the algebra is semisimple.
 
Formulas for the Schur elements of the Ariki-Koike algebra have been obtained independently, first by  
 Geck, Iancu and Malle \cite{GIM}, and later by Mathas \cite{Mat}.  The first  aim of this paper  is to give a cancellation-free formula for these polynomials (Theorem \ref{canfreeform}), so that their factors can be easily read and programmed.
 We then present  a number of direct applications.  These include a new formula for Lusztig's $a$-function, as well as a simple classification of the projective irreducible modules for Ariki-Koike algebras (that is, the blocks of defect $0$). 

The second part of the paper is devoted to another aspect of the representation theory of these algebras  in connection with these Schur elements: the theory of canonical basic sets. 
 The main aim here  is 
to obtain a classification of the simple modules for specialisations of cyclotomic Hecke algebras in characteristic $0$.  
 In \cite{CJ1}, we studied  mainly the case of finite Weyl groups. In this paper, we focus on cyclotomic Hecke algebras of type $G(l,p,n)$.  
 Using Lusztig's  $a$-function, defined from the Schur elements,    
 the theory of canonical basic sets provides a natural and efficient  way to parametrise the simple modules of these algebras. 

  The existence and  explicit  determination of the canonical basic sets is already known in the case of Hecke algebras of finite Weyl groups 
   (see \cite{book} and \cite{CJ1}). The case of cyclotomic Hecke algebras of type $G(l,p,n)$ has been partially studied in \cite{book,GenJa}, and recently in \cite{CGG}
     using the theory of Cherednik algebras. Answering a question raised in \cite{CGG}, the goal of the last part of this paper is to complete the determination of the canonical basic sets in this case. 
%
%

\end{section}

\section{Preliminaries}
In this section, we introduce the necessary  definitions and notation. 
\begin{abs} 
A {partition} $\lambda=(\lambda_1,\lambda_2,\lambda_3,\ldots)$ is a decreasing sequence of non-negative integers. We define the {length of} $\lambda$ to be the smallest integer $\ell(\lambda)$ such that $\lambda_i=0$ for all $i>\ell(\lambda)$.
We write $|\lambda|:=\sum_{i \geq 1}\lambda_i$ and we say that $\lambda$ is a {partition of} $m$, for some $m \in \mathbb{N}$, if $m=|\lambda|$. 
We set $n(\lambda):=\sum_{i \geq 1}  (i-1)\lambda_i$.

We define the set of nodes $[\lambda]$ of $\lambda$ to be the set
$$[\lambda]:=\{(i,j)\,\,|\,\, i\geq 1,\,\,1 \leq j \leq \lambda_i\}.$$
A node $x=(i,j)$ is called {removable} if $[\lambda] \setminus \{(i,j)\}$ is still the set of nodes of a partition. Note that if $(i,j)$ is removable, then $j=\lambda_i$.

The {conjugate partition} of $\lambda$ is the partition $\lambda'$ defined by
$$\lambda'_{k}:=\#\{i\,|\,i\geq 1 \text{ such that } \lambda_i\geq k\}.$$
Obviously, $\lambda_1'=\ell(\lambda)$. 
The set  of nodes of $\lambda'$ satisfies
$$(i,j) \in [\lambda'] \Leftrightarrow (j,i)\in [\lambda].$$
Note that if $(i,\lambda_i)$ is a removable node of $\lambda$, then
$\lambda_{\lambda_i}'=i.$
Moreover, we have $$n(\lambda)=\sum_{i \geq 1}  (i-1)\lambda_i= \frac{1}{2}\sum_{i \geq 1}(\lambda'_i-1)\lambda'_i.$$ 
If $x=(i,j) \in [\lambda]$ and $\mu$ is another partition, we define the {\em generalised hook length of $x$ with respect to }($\lambda$, $\mu$) to be the integer:
            $$h_{i,j}^{\lambda,\mu}:=\lambda_i-i+\mu'_j-j+1.$$ 
For $\mu=\lambda$, the above formula becomes the classical hook length formula (giving the length of the hook of $\lambda$ that $x$ belongs to).
Moreover, we define the {content} of $x$ to be the difference
$$\mathrm{cont}(x)=j-i.$$
The following lemma, whose proof is an easy combinatorial exercise (with the use of Young diagrams), relates the contents of the nodes of (the ``right rim'' of) $\lambda$ with the contents of the nodes of (the ``lower rim'' of) $\lambda'$.
\end{abs}
\begin{Lem}\label{conj cont}
Let $\lambda=(\lambda_1,\lambda_2,\ldots)$ be a partition and let $k$ be an integer such that $1 \leq k \leq \lambda_1$. Let $q$ and $y$ be two indeterminates. Then we have
 $$\frac{1}{(q^{\lambda_1}y-1)}\cdot\left(\prod_{1\leq i\leq \lambda_k'} \frac{q^{\lambda_i-i+1}y-1}{q^{\lambda_i-i}y-1} \right)=\frac{1}{(q^{-\lambda'_{k}+k-1}y-1)}\cdot\left(\prod_{k \leq j\leq \lambda_1} \frac{q^{-\lambda'_j+j-1}y-1}{q^{-\lambda'_j+j}y-1} \right).
$$
\end{Lem}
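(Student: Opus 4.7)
Write $f(x) := q^{x}y - 1$ for brevity. The plan is to prove the identity by \emph{decreasing} induction on $k$, starting from the maximum value $k = \lambda_1$ and descending to $k = 1$.

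For the base case $k = \lambda_1$, set $\ell := \lambda'_{\lambda_1}$, the number of rows of maximal length. Every row $i$ with $1 \le i \le \ell$ satisfies $\lambda_i = \lambda_1$, so the LHS product telescopes to $f(\lambda_1)/f(\lambda_1 - \ell)$, and multiplication by the prefactor $1/f(\lambda_1)$ yields $1/f(\lambda_1 - \ell)$. On the RHS, the product contains only the single term $j = \lambda_1$, namely $f(\lambda_1 - \ell - 1)/f(\lambda_1 - \ell)$, which combines with the prefactor $1/f(\lambda_1 - \ell - 1)$ to give the same expression.

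For the inductive step, assuming the identity for $k$, I would verify it for $k-1$ by showing $\mathrm{LHS}(k-1)/\mathrm{LHS}(k) = \mathrm{RHS}(k-1)/\mathrm{RHS}(k)$. The crucial combinatorial input is that $\lambda_i = k - 1$ for every $i \in (\lambda_k',\, \lambda_{k-1}']$, which follows directly from the definition of the conjugate partition. On the LHS, since the prefactor does not depend on $k$, the ratio equals the product of the ``new'' factors
\[
\prod_{i=\lambda_k'+1}^{\lambda_{k-1}'} \frac{f(\lambda_i - i + 1)}{f(\lambda_i - i)} \;=\; \prod_{i=\lambda_k'+1}^{\lambda_{k-1}'} \frac{f(k - i)}{f(k - 1 - i)} \;=\; \frac{f(k - \lambda_k' - 1)}{f(k - \lambda_{k-1}' - 1)}
\]
by a straightforward telescoping. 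On the RHS, the change consists of the extra $j = k-1$ term $f(k-\lambda_{k-1}'-2)/f(k-\lambda_{k-1}'-1)$ together with the switch from the prefactor $1/f(k - \lambda_k' - 1)$ to $1/f(k - \lambda_{k-1}' - 2)$; after a one-line cancellation their combined effect is again $f(k - \lambda_k' - 1)/f(k - \lambda_{k-1}' - 1)$.

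The only real work is keeping track of exponents and index ranges; once the telescoping is set up, each step is mechanical. An alternative route would be to cross-multiply and compare multisets of exponents in numerator and denominator directly, but the inductive approach seems cleaner, as the only nontrivial combinatorial observation needed is the range identity $\lambda_i = k-1$ on $(\lambda_k', \lambda_{k-1}']$.
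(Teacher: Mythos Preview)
Your proof is correct: the base case and the inductive step both check out, including the edge case $\lambda_k' = \lambda_{k-1}'$ where the LHS ratio is an empty product and the RHS prefactor change cancels exactly against the new $j=k-1$ factor. The paper does not actually supply a proof of this lemma, calling it ``an easy combinatorial exercise (with the use of Young diagrams)''; your decreasing induction on $k$ with telescoping is a clean and standard way to make that exercise precise.
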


\begin{abs}
Let $l$ and $n$ be positive integers.
 An $l$-{partition} of  $n$ is an ordered $l$-tuple $\blambda=(\lambda^{0},\lambda^{1},\ldots,\lambda^{l-1})$  of partitions  such that $\sum_{0 \leq s \leq l-1}|\lambda^{s}|=n$. We denote by $\Pi^l_n $ the set of $l$-partitions of $n$.
\end{abs}

\begin{abs}
Let $R$ be a commutative domain with $1$. Fix elements
$q,\,Q_0,\,\ldots,\,Q_{l-1}$ of $R$, and assume that $q$ is invertible in $R$. Set ${\bf q}:=(Q_0,\,\ldots,\,Q_{l-1}\,;\,q)$.
The {\em Ariki-Koike algebra} $\mathcal{H}^{\bf q}_{n}$ is the unital associative $R$-algebra with generators $T_0,\,T_1,\,\ldots,\,T_{n-1}$ and relations:
$$\begin{array}{rl}
(T_0 -Q_0) (T_0 -Q_1)\cdots(T_0 -Q_{l-1})=0,& \\
(T_i-q)(T_i+1)=0  & \text{for $1\leq i \leq n-1$},\\
T_0T_1T_0T_1=T_1T_0T_1T_0&,\\
T_iT_{i+1}T_i=T_{i+1}T_iT_{i+1} & \text{for $1\leq i \leq n-2$},\\
T_iT_j=T_jT_i  &\text{for  $0\leq i <j \leq n-1$ with $j-i>1$.}
\end{array}$$
The last three relations are the {\em braid relations} satisfied by $T_0,\,T_1,\,\ldots,\,T_{n-1}$.

The Ariki-Koike algebra $\mathcal{H}^{\bf q}_{n}$ is a deformation of the group algebra of the complex reflection group $G(l,1,n)=(\mathbb{Z}/l\mathbb{Z})\wr \mathfrak{S}_n$. Ariki and Koike \cite{ArKo} have proved  that  $\mathcal{H}^{\bf q}_{n}$  is a free $R$-module of rank $l^n n!=|G(l,1,n)|$ (see \cite[Proposition 13.11]{arikilivre}). 
In addition, when $R$ is a field, 
they have  constructed a simple $\mathcal{H}^{\bf q}_{n}$-module $V^{\blambda}$, with character $\chi^{\blambda}$, for each $l$-partition $\blambda$ of $n$ (see \cite[Theorem 13.6]{arikilivre}). These modules form a complete set of non-isomorphic simple modules
 in the case where $\mathcal{H}^{\bf q}_{n}$ is split semisimple  (see \cite[Corollary 13.9]{arikilivre}).

\end{abs}

\begin{abs} There is a useful  semisimplicty criterion for Ariki-Koike algebras which has been given by Ariki in \cite{Ar}. This criterion will be recovered from our results later  (see Theorem \ref{ssimple}), so
 let us simply assume from now on that   $\mathcal{H}^{\bf q}_{n}$ is split semisimple. This happens, for example,  when $q, \,Q_0,\,\ldots,\,Q_{l-1}$ are indeterminates and  $R=\mathbb{Q}(q,Q_0,\,\ldots,\,Q_{l-1})$. 

 Now, there exists a linear form $\tau:\mathcal{H}^l_{n} 
\rightarrow R$ which was introduced by Bremke and Malle in \cite{BreMa}, and was proved to be symmetrizing by Malle and Mathas in \cite{MaMa} whenever all $Q_i$'s are invertible in $R$. 
An explicit description of this form can be found in any of these two articles. Following Geck's results on symmetrizing forms (see \cite[Theorem 7.2.6]{GePf}), we obtain the following definition for the Schur elements associated to the irreducible representations of $\mathcal{H}^{\bf q}_{n}$.
\end{abs}
\begin{Def}\label{Schur}
Suppose that $R$ is a field and that $\mathcal{H}^{\bf q}_{n}$ is split semisimple. The {Schur elements} of 
$\mathcal{H}^{\bf q}_{n}$ are the elements $s_{\blambda}({\bf q})$ of $R$ such that
$$\tau = \sum_{\blambda \in \Pi_n^l} \frac{1}{s_{\blambda}({\bf q})}\chi^{\blambda}. $$
\end{Def}




\begin{abs}
The Schur elements of the Ariki-Koike algebra $\mathcal{H}^{\bf q}_{n}$ have been independently calculated by Geck, Iancu and Malle \cite{GIM}, and by Mathas \cite{Mat}. 
From now on, for all $m \in \mathbb{N}$, let $[m]_q:=(q^m-1)/(q-1)=q^{m-1}+q^{m-2}+\cdots+q+1$.
The formula given by Mathas does not demand extra notation and is the following:

\begin{Th}\label{Mathas} Let $\blambda=(\lambda^{0},\lambda^{1},\ldots,\lambda^{l-1})$ be an $l$-partition of $n$. Then
$$s_{\blambda}({\bf q})=(-1)^{n(l-1)} (Q_0Q_1\cdots Q_{l-1})^{-n}
q^{-\alpha({\blambda}')} \prod_{0\leq s \leq l-1}  \prod_{(i,j) \in [\lambda^{s}]}
 Q_s[{{h_{i,j}^{\lambda^s,\lambda^{s}}}}]_q \,\cdot
 \prod_{0 \leq s< t \leq l-1} X_{st}^{\blambda} ,$$
where
$$\alpha({\blambda}')=\frac{1}{2}\sum_{0\leq s \leq l-1}\sum_{i \geq 1}
(\lambda^{s'}_i-1)\lambda^{s'}_i$$
and
$$ X_{st}^{\blambda}=  
 \prod_{(i,j) \in [\lambda^{t}]}(q^{j-i}Q_t-Q_s) 
\cdot
 \prod_{(i,j) \in [\lambda^{s}]}\left((q^{j-i}Q_s-q^{\lambda_1^{t}}Q_t) 
\prod_{1\leq k\leq \lambda_1^{t}}
\frac{q^{j-i}Q_s-q^{k-1-\lambda_k^{{t}'}}Q_t}{q^{j-i}Q_s-q^{k-\lambda_k^{t'}}Q_t}\right).  
$$
\end{Th}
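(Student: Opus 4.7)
The plan is to derive the formula via the seminormal representation theory of the split semisimple Ariki-Koike algebra. Under the semisimplicity hypothesis, the Jucys-Murphy elements $L_1, \ldots, L_n$ generate a maximal commutative subalgebra of $\mathcal{H}^{\mathbf{q}}_n$, and each simple module $V^{\blambda}$ admits a seminormal basis $\{v_T : T \in \mathrm{Std}(\blambda)\}$ indexed by standard $l$-tableaux of shape $\blambda$. On $v_T$, the element $L_k$ acts as the scalar $q^{\mathrm{cont}_T(k)} Q_{s_T(k)}$, where $s_T(k)$ and $\mathrm{cont}_T(k)$ record the component and the content of the node of $T$ containing $k$.

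Next, I would invoke the Schur-element formula in this framework: for a distinguished standard tableau $T^{\blambda}$ (for instance, the one obtained by filling nodes row by row, component by component) one has $s_{\blambda}(\mathbf{q}) = \gamma_{T^{\blambda}}$, where $\gamma_{T^{\blambda}}$ is the scalar arising from the normalisation of the associated primitive idempotent with respect to the symmetrising form $\tau$. This $\gamma_{T^{\blambda}}$ decomposes as a telescoping product, indexed by the nodes of $\blambda$ added one at a time, of explicit rational functions in $q, Q_0, \ldots, Q_{l-1}$ built from ratios of eigenvalues of Jucys-Murphy elements. This yields an explicit but rather unwieldy product formula for $s_{\blambda}(\mathbf{q})$.

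The main obstacle is to rearrange this raw product into the form stated in the theorem. I would argue by induction on $n$, comparing the formulas for $\blambda$ and for an $l$-partition $\blambda^-$ obtained by removing a single removable node $x = (i,j) \in [\lambda^s]$. The within-component contribution reduces to a classical type $A$ hook-walk identity and produces the factor $Q_s [h_{i,j}^{\lambda^s,\lambda^s}]_q$ together with the power of $q$ governed by $\alpha(\blambda')$. The cross-component contributions $X_{st}^{\blambda}$ are more delicate: one must track how adding a node in component $s$ modifies a product indexed by the nodes of component $t$, and it is here that Lemma \ref{conj cont} is the critical input, rewriting products along the right rim of $\lambda^t$ as products along the lower rim of its conjugate $\lambda^{t'}$ in exactly the form needed for the two sides of the induction to match. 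A direct expansion confirms the sign $(-1)^{n(l-1)}$ and the overall factor $(Q_0 \cdots Q_{l-1})^{-n}$.
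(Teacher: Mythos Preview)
The paper does not give a proof of this statement. Theorem~\ref{Mathas} is quoted from Mathas~\cite{Mat} (alongside the parallel Theorem~\ref{sym} from~\cite{GIM}) as an established input; the paper's own work begins in Section~3, where this formula is taken as the \emph{starting point} and reorganised into the cancellation-free form of Theorem~\ref{canfreeform}. So there is no proof in the paper to compare your proposal against.

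That said, your outline is a fair sketch of how the result is obtained in~\cite{Mat}: Mathas does work through the seminormal form, identifies the Schur element with the coefficient $\gamma_T$ attached to a distinguished standard tableau, and arrives at the stated product by an inductive computation over the nodes. One point to flag is your invocation of Lemma~\ref{conj cont}. In the present paper that lemma is not used to derive Theorem~\ref{Mathas}; it is used in Section~3 to go in the \emph{other} direction, namely to simplify the already-established $X_{st}^{\blambda}$ factors of Mathas's formula into the hook-length products of Theorem~\ref{canfreeform} (see the verification of Equation~(\ref{add1})). Citing it as the ``critical input'' for producing the $X_{st}^{\blambda}$ in the first place conflates these two distinct simplification steps; Mathas's own derivation obtains the $X_{st}^{\blambda}$ factors directly from the Jucys--Murphy eigenvalue products and does not rely on this rim-swapping identity.
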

$ $

The formula by Geck, Iancu and Malle is more symmetric, and describes the Schur elements in terms of {\em beta numbers}. If $\blambda=(\lambda^{0},\lambda^{1},\ldots,\lambda^{l-1})$ is an $l$-partition of $n$, then the {\em length of} $\blambda$ is $\ell(\blambda)=\mathrm{max}\{\ell(\lambda^s)\,|\,0\leq s \leq l-1\}$. 
 Fix an integer $L$ such that $L \geq \ell(\blambda)$. The $L$-{\em beta numbers} for $\lambda^{s}$ are the integers $\beta_i^{s}=\lambda_i^{s}+L-i$ for $i = 1, \ldots , L$. 
Set $B^{s} = \{\beta_1^{s}, \ldots, \beta_L^{s}\}$ for $s=0,\ldots,l-1$. The matrix ${\bf B}=(B^{s})_{0\leq s \leq l-1}$ is called the $L$-{symbol} of $\blambda$.

\begin{Th}\label{sym}
Let  $\blambda=(\lambda^{0},\ldots,\lambda^{l-1})$ be an $l$-partition of $n$
with $L$-symbol ${\bf B}=(B^{s})_{0\leq s \leq l-1}$, where $L \geq \ell(\blambda)$.  Let $a_{L}:=n(l-1)+\binom{ l}{ 2}\binom{ L}{ 2}$ and $b_{L}:=lL(L-1)(2lL-l-3)/12$. Then  $$s_{\blambda}({\bf q})=(-1)^{a_{L}} q^{b_{L}}(q-1)^{-n}(Q_0Q_1\ldots Q_{l-1})^{-n}\nu_{\blambda}/ \delta_{\blambda},$$ where
$$
\nu_{\blambda}=
\prod_{0\leq s<t\leq l-1}(Q_s-Q_t)^L\prod_{0 \leq s,\,t \leq l-1}\prod_{b_s \in B^{s}}\prod_{1 \leq k \leq b_s} 
(q^kQ_s-Q_t)$$
and
$$\delta_{\blambda}=\prod_{0\leq s< t \leq l-1}\prod_{(b_s,b_t) \in B^{s}\times B^{t}}(q^{b_s}Q_s-q^{b_t}Q_t) \prod_{0 \leq s \leq l-1} \prod_{1 \leq i < j \leq L}(q^{\beta_i^{s}}Q_s-q^{\beta_j^{s}}Q_s).
$$
\end{Th}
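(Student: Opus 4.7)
The plan is to derive Theorem \ref{sym} directly from Mathas's formula (Theorem \ref{Mathas}) by a combinatorial rewriting. Since both identities compute the same rational function $s_{\blambda}({\bf q})$, it suffices to verify that the two closed-form expressions agree for every $\blambda \in \Pi^l_n$ and every $L \ge \ell(\blambda)$. I would organise the factors of each formula by ``type'': those depending on a single component $\lambda^s$ (the \emph{diagonal} contribution, governed by $[h_{i,j}^{\lambda^s,\lambda^s}]_q$ in Mathas's formula and by the $s=t$ part of $\nu_{\blambda}$ together with $\prod_{i<j}(q^{\beta_i^s}Q_s - q^{\beta_j^s}Q_s)$ in the symbol formula), and those depending on a pair $s \neq t$ (the \emph{cross} contribution $X_{st}^{\blambda}$ on one side, and $(Q_s-Q_t)^L$ together with the $s \neq t$ parts of $\nu_{\blambda}$ and $\delta_{\blambda}$ on the other); I would then match each group separately.

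For the diagonal part, the key ingredient is the classical $q$-hook identity: with $\beta_i^s = \lambda_i^s + L - i$,
$$\prod_{(i,j) \in [\lambda^s]} [h_{i,j}^{\lambda^s,\lambda^s}]_q \;=\; \frac{\prod_{i=1}^{L} [\beta_i^s]_q!}{\prod_{1 \le i < j \le L} [\beta_i^s - \beta_j^s]_q},$$
which is independent of $L$ for $L \ge \ell(\lambda^s)$ by the standard ``add a row of zero'' cancellation. Converting $[m]_q = (q^m-1)/(q-1)$ and $q^{\beta_i^s - \beta_j^s}-1 = q^{-\beta_j^s}(q^{\beta_i^s}-q^{\beta_j^s})$ produces exactly the diagonal part of $\nu_{\blambda}/\delta_{\blambda}$ and accumulates explicit powers of $(q-1)$, of $Q_s$ and of $q$, to be tracked and later merged into $(q-1)^{-n}$, $(Q_0 \cdots Q_{l-1})^{-n}$ and $q^{b_L}$.

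For the cross part, the main tool is Lemma \ref{conj cont}. For each node $(i,j) \in [\lambda^s]$, applying the lemma to $\lambda^t$ with the substitution $y = q^{j-i}Q_s/Q_t$ rewrites the ``lower-rim'' ratio
$$(q^{j-i}Q_s - q^{\lambda_1^t} Q_t) \prod_{1 \le k \le \lambda_1^t}\frac{q^{j-i}Q_s - q^{k-1-\lambda_k^{t'}}Q_t}{q^{j-i}Q_s - q^{k-\lambda_k^{t'}}Q_t}$$
inside $X_{st}^{\blambda}$ as a product indexed by the rows of $\lambda^t$, i.e.\ by its beta-numbers $\beta_i^t$. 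Combined with the factor $\prod_{(i,j) \in [\lambda^t]}(q^{j-i}Q_t - Q_s)$ (which rearranges analogously by swapping the roles of $s$ and $t$), and after telescoping over $(i,j) \in [\lambda^s]$, the cross contribution becomes a product over pairs $(b_s,b_t) \in B^s \times B^t$: the antisymmetric factors $(q^{b_s}Q_s - q^{b_t}Q_t)$ land in $\delta_{\blambda}$, while the boundary terms $(Q_s-Q_t)^L$ and $\prod_{b_s \in B^s}\prod_{k=1}^{b_s}(q^kQ_s - Q_t)$ land in $\nu_{\blambda}$.

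The final bookkeeping is elementary: the sign $(-1)^{a_L}$ is obtained by tallying the $(-1)$'s produced when reordering each $q^m-q^{m'}$ as $-(q^{m'}-q^m)$, and the power $q^{b_L}$ by accumulating the $q$-exponents extracted from factorings such as $q^{\beta_i^s - \beta_j^s}-1 = q^{-\beta_j^s}(q^{\beta_i^s}-q^{\beta_j^s})$; independence from $L$ is then checked by observing that the replacement $L \mapsto L+1$ (which shifts each $\beta_i^s$ by $+1$ and appends $\beta_{L+1}^s = 0$) leaves $\nu_{\blambda}/\delta_{\blambda}$ invariant after the obvious cancellations, the extra powers being absorbed into the explicit definitions of $a_L$ and $b_L$. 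The principal obstacle is the cross-term manipulation: translating $X_{st}^{\blambda}$ into beta-number form requires applying Lemma \ref{conj cont} node-by-node and then executing a delicate telescoping across all $(i,j) \in [\lambda^s]$, and it is in this step that the bulk of the combinatorial content of the proof lies.
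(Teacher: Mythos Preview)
The paper does not prove Theorem \ref{sym}. It is stated there as a known result due to Geck, Iancu and Malle \cite{GIM}, in parallel with Mathas's formula (Theorem \ref{Mathas}, from \cite{Mat}); both are quoted from the literature as starting points for the paper's own contribution, Theorem \ref{canfreeform}. So there is no ``paper's own proof'' to compare your proposal against.

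That said, your plan is a coherent route to an independent verification of the equivalence between the two existing formulas. The diagonal step rests on the standard $q$-analogue of the hook--beta identity, and the cross step is close in spirit to what the paper actually does in \S3: there, Lemma \ref{conj cont} is used (node by node, inductively on $|\lambda^s|$) to transform $X_{st}^{\blambda}$ into the cancellation-free form, and the symmetric formula of Theorem \ref{sym} is invoked only to avoid redoing the induction on $\lambda^t$. Your proposal to push the same telescoping all the way to the beta-number presentation, rather than to the hook-length presentation, is plausible, but you should be aware that this is precisely the laborious computation that \cite{GIM} carries out; the bookkeeping of the constants $a_L$, $b_L$ and of the extra $(Q_s-Q_t)^L$ factors is not difficult, but the telescoping itself is genuinely intricate and your sketch does not yet isolate the key cancellation that makes it collapse to a product over $B^s \times B^t$. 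If you want to complete this, it would be cleaner to go via Theorem \ref{canfreeform} (whose proof the paper gives in full) and then compare \emph{that} formula to Theorem \ref{sym}, rather than starting from Theorem \ref{Mathas} directly.
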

$ $
As the reader may see, in both formulas above, the factors of $s_{\blambda}({\bf q})$ are not obvious.
Hence, it is not obvious for which values of ${\bf q}$ the Schur element $s_{\blambda}({\bf q})$ becomes zero.
\end{abs}
\section{A cancellation-free formula for the Schur elements}
In this section, we will give a  cancellation-free formula for the Schur elements of $\mathcal{H}^{\bf q}_{n}$. This formula is also symmetric.

\begin{abs}
Let $\blambda=(\lambda^{0},\lambda^{1},\ldots,\lambda^{l-1})$ be an $l$-partition of $n$. 
The multiset $(\lambda_i^{s})_{0 \leq s \leq l-1,\,i\geq 1}$ is a composition of $n$ ({\em i.e.}
a multiset of non-negative integers whose sum is equal to $n$). By reordering the elements of this composition, we obtain a partition of $n$. We denote this partition by $\bar{\blambda}$.
(e.g., if $\blambda=((4,1),\emptyset,(2,1))$, then $\bar{\blambda}=(4,2,1,1)$).
\end{abs}

\begin{Th}\label{canfreeform} Let $\blambda=(\lambda^{0},\lambda^{1},\ldots,\lambda^{l-1})$ be an $l$-partition of $n$. 
Then 
\begin{equation}\label{pretty}
s_{\blambda}({\bf q})=(-1)^{n(l-1)}q^{-n(\bar{\blambda})}(q-1)^{-n} \prod_{0\leq s \leq l-1}
\prod_{(i,j) \in [\lambda^s]} \prod_{0\leq t \leq l-1}  (q^{h_{i,j}^{\lambda^s,\lambda^{t}}}Q_sQ_t^{-1}-1).
\end{equation}
Since the total number of nodes in $\blambda$ is equal to $n$, the above formula can be rewritten as follows:
\begin{equation}\label{claim}
s_\lambda({\bf q})=(-1)^{n(l-1)}q^{-n(\bar{\blambda})}
 \prod_{0 \leq s \leq l-1} \prod_{(i,j) \in [\lambda^{s}]}\left( [{{h_{i,j}^{\lambda^s,\lambda^{s}}}}]_q
 \prod_{0 \leq t \leq l-1,\, t\neq s} (q^{h_{i,j}^{\lambda^s,\lambda^{t}}}Q_sQ_t^{-1}-1)\right).
 \end{equation}
\end{Th}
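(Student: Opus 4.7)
The plan is to derive the cancellation-free formula \eqref{pretty} from Mathas's formula (Theorem~\ref{Mathas}) by applying Lemma~\ref{conj cont} to each pairwise factor $X_{st}^{\blambda}$. First, I split the triple product in \eqref{pretty} into diagonal ($s=t$) and off-diagonal ($s\neq t$) contributions. The diagonal part factorises as
\[
\prod_{s}\prod_{(i,j)\in[\lambda^s]}(q^{h_{i,j}^{\lambda^s,\lambda^s}}-1)=(q-1)^n\prod_{s}\prod_{(i,j)\in[\lambda^s]}[h_{i,j}^{\lambda^s,\lambda^s}]_q,
\]
whose $(q-1)^n$ cancels the prefactor $(q-1)^{-n}$ and whose remaining factor matches the classical hook-length term appearing in Mathas's formula. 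It therefore remains to identify the off-diagonal part with $\prod_{s<t}X_{st}^{\blambda}$ modulo the monomial prefactors $(Q_0\cdots Q_{l-1})^{-n}\prod_{s}Q_s^{|\lambda^s|}q^{-\alpha(\blambda')}$ in Mathas's formula versus $q^{-n(\bar{\blambda})}$ in \eqref{pretty}.

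Fix a pair $(s,t)$ and decompose $X_{st}^{\blambda}=A_{st}\cdot B_{st}$, where $A_{st}=\prod_{(i,j)\in[\lambda^t]}(q^{j-i}Q_t-Q_s)$. In every linear factor appearing in $B_{st}$, factor out $q^{j-i}Q_s$ to produce the form $q^{j-i}Q_s(1-q^a Q_tQ_s^{-1})$. With $y=q^{-(j-i)}Q_tQ_s^{-1}$, the telescoping ratio in $B_{st}$ becomes exactly the right-hand side of Lemma~\ref{conj cont} applied with $\lambda=\lambda^t$ and the lemma's parameter $k$ set to $1$; the lemma converts this column-indexed ratio over $\lambda^t$ into a row-indexed one. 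Substituting back, interchanging the order of the resulting double product over $(i,j)\in[\lambda^s]$ and $i'\in\{1,\dots,\ell(\lambda^t)\}$, and telescoping the inner product over $j\in\{1,\dots,\lambda_i^s\}$ for each fixed $(i,i')$ reduces $B_{st}$ to a bilinear expression in the row lengths of $\lambda^s$ and $\lambda^t$. Factoring $Q_s$ from each linear term in $A_{st}$ and recombining with the transformed $B_{st}$, the resulting linear factors split into two groups: those whose exponents reconstruct as the generalised hook lengths $h_{i,j}^{\lambda^s,\lambda^t}$ indexed by $[\lambda^s]$, and those reconstructing as $h_{i,j}^{\lambda^t,\lambda^s}$ indexed by $[\lambda^t]$. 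Using the elementary identity $(q^a Q_tQ_s^{-1}-1)=q^a Q_tQ_s^{-1}(q^{-a}Q_sQ_t^{-1}-1)$ to convert factors between the two forms, one recovers, for the unordered pair $\{s,t\}$, exactly the product $\prod_{(i,j)\in[\lambda^s]}(q^{h_{i,j}^{\lambda^s,\lambda^t}}Q_sQ_t^{-1}-1)\prod_{(i,j)\in[\lambda^t]}(q^{h_{i,j}^{\lambda^t,\lambda^s}}Q_tQ_s^{-1}-1)$ required by \eqref{pretty}.

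The main obstacle is the careful bookkeeping of the $q$- and $Q$-monomial factors accumulated in these manipulations. The $Q_s,Q_t$-powers collected from the repeated factorisations must precisely cancel the prefactor $(Q_0\cdots Q_{l-1})^{-n}\prod_{s}Q_s^{|\lambda^s|}$ in Mathas's formula, so that no $Q$-monomials survive in \eqref{pretty}; and the accumulated $q$-exponents, combined with $-\alpha(\blambda')$, must sum to $-n(\bar{\blambda})$. This last identity is verified by applying the preliminary formula $n(\lambda)=\tfrac{1}{2}\sum_{i\geq 1}(\lambda'_i-1)\lambda'_i$ both to each partition $\lambda^s$ individually and to the aggregate sorted partition $\bar{\blambda}$, together with elementary manipulations of the content sums $\sum_{(i,j)\in[\lambda^s]}(j-i)$ that arise from factoring the $q^{j-i}$ terms in $A_{st}$ and $B_{st}$.
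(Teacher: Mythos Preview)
Your reduction is the same as the paper's: both start from Mathas's formula and reduce the theorem to proving, for each pair $s<t$,
\[
X_{st}^{\blambda}=q^{-\sum_{i\geq 1}\lambda^{s'}_i\lambda^{t'}_i}\,Q_s^{|\lambda^{t}|} Q_t^{|\lambda^{s}|}
\prod_{(i,j) \in [\lambda^{s}]}(q^{h_{i,j}^{\lambda^s,\lambda^{t}}}Q_sQ_t^{-1}-1)\prod_{(i,j) \in [\lambda^{t}]}(q^{h_{i,j}^{\lambda^t,\lambda^s}}Q_tQ_s^{-1}-1),
\]
together with the identity $n(\bar{\blambda})=\alpha(\blambda')+\sum_{s<t}\sum_i \lambda^{s'}_i\lambda^{t'}_i$ (which the paper states and proves as a separate lemma). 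Where you diverge is in how you establish the displayed identity for $X_{st}^{\blambda}$.

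The paper argues by \emph{induction on $|\lambda^s|$}. One removes a removable node $(i,\lambda_i^s)$ from $\lambda^s$ and compares $X_{st}^{\blambda}$ with $X_{st}^{\bnu}$; the ratio is a single product over $1\leq k\leq \lambda_1^t$, and a single application of Lemma~\ref{conj cont} with parameter $k=\lambda_i^s$ turns it into exactly the new hook-length factors. The paper also invokes the symmetry $X_{st}^{\blambda}(Q_s,Q_t)=X_{st}^{\bmu}(Q_t,Q_s)$ coming from Theorem~\ref{sym} to avoid a parallel induction on $|\lambda^t|$.

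Your direct approach, applying Lemma~\ref{conj cont} with $k=1$ to every node of $\lambda^s$ at once and then telescoping over $j$, is a legitimate strategy, but the sentence ``the resulting linear factors split into two groups \ldots'' hides the entire difficulty. After your telescoping step, $B_{st}$ becomes a product of $|\lambda^s|$ ``front'' factors times a rational expression with $\ell(\lambda^s)\ell(\lambda^t)$ factors in numerator and denominator, indexed by pairs of rows $(i,i')$. To recover the two hook-length products indexed by $[\lambda^s]$ and $[\lambda^t]$, one must still cancel those $\ell(\lambda^s)\ell(\lambda^t)$ denominator factors against a specific subset of the numerator, $A_{st}$, and front factors; this cancellation rests on the beta-number complement identity $\{i-\lambda_i:1\leq i\leq L\}\sqcup\{\lambda'_j-j+1:1\leq j\leq \lambda_1\}=\{1-\lambda_1,\ldots,L\}$ applied to $\lambda^s$ (for each fixed $i'$) and separately to $\lambda^t$. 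You neither state this identity nor indicate how the matching goes, so as written there is a genuine gap at the crucial step. The paper's induction avoids this global cancellation entirely by localising the computation to a single node.
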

\begin{abs}

We will now proceed to the proof of the above result.
Following Theorem \ref{Mathas}, we have that

$$s_{\blambda}({\bf q})=(-1)^{n(l-1)} (Q_0Q_1\cdots Q_{l-1})^{-n}
q^{-\alpha({\blambda}')} \prod_{0\leq s \leq l-1}  \prod_{(i,j) \in [\lambda^{s}]}
 Q_s[{{h_{i,j}^{\lambda^s,\lambda^{s}}}}]_q \,\cdot
 \prod_{0 \leq s< t \leq l-1} X_{st}^{\blambda} ,$$
where
$$\alpha({\blambda}')=\frac{1}{2}\sum_{0 \leq s \leq l-1}\sum_{i \geq 1}
(\lambda^{s'}_i-1)\lambda^{s'}_i$$
and
$$ X_{st}^{\blambda}=  
 \prod_{(i,j) \in [\lambda^t]}(q^{j-i}Q_t-Q_s) 
\cdot
 \prod_{(i,j) \in [\lambda^s]}\left((q^{j-i}Q_s-q^{\lambda_1^t}Q_t) 
\prod_{1\leq k \leq \lambda_1^t}
\frac{q^{j-i}Q_s-q^{k-1-\lambda_k^{t'}}Q_t}{q^{j-i}Q_s-q^{k-\lambda_k^{t'}}Q_t}\right).  
$$

The following lemma relates the terms $q^{-n(\bar{\blambda})}$ and $q^{-\alpha({\blambda}')}$ .
\end{abs}
\begin{Lem}\label{lemma q}
Let $\blambda$ be an $l$-partition of $n$. We have that
$$\alpha(\blambda')+\sum_{0\leq s <t\leq l-1}\sum_{i\geq 1}\lambda^{s'}_i\lambda^{t'}_i=
n(\bar{\blambda}).$$
\end{Lem}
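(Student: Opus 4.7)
The plan is to use the alternative expression for $n(\mu)$ in terms of the conjugate partition, namely $n(\mu)=\tfrac12\sum_{i\geq 1}(\mu'_i-1)\mu'_i$, which was recorded in the preliminaries. Applying this to $\mu=\bar{\blambda}$ gives
\[
n(\bar{\blambda})=\tfrac12\sum_{i\geq 1}(\bar{\blambda}'_i-1)\bar{\blambda}'_i,
\]
so the whole identity reduces to understanding $\bar{\blambda}'_i$ in terms of the conjugates $\lambda^{s'}$.

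The first key step is the combinatorial observation that
\[
\bar{\blambda}'_i=\sum_{0\leq s\leq l-1}\lambda^{s'}_i
\]
for every $i\geq 1$. This is immediate from the definition of conjugation: $\bar{\blambda}'_i$ counts the number of parts of $\bar{\blambda}$ that are $\geq i$, but $\bar{\blambda}$ is by construction the reordering of the multiset of all parts of $\lambda^{0},\ldots,\lambda^{l-1}$, so this count splits as the sum over $s$ of the number of parts of $\lambda^{s}$ that are $\geq i$, which is exactly $\lambda^{s'}_i$.

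Substituting this into the formula for $n(\bar{\blambda})$ and expanding the square yields
\[
n(\bar{\blambda})=\tfrac12\sum_{i\geq 1}\left(\sum_{s}\lambda^{s'}_i\right)\!\left(\sum_{s}\lambda^{s'}_i-1\right)=\tfrac12\sum_{i\geq 1}\sum_{s}\bigl((\lambda^{s'}_i)^2-\lambda^{s'}_i\bigr)+\sum_{i\geq 1}\sum_{0\leq s<t\leq l-1}\lambda^{s'}_i\lambda^{t'}_i.
\]
The first sum is precisely $\alpha(\blambda')$ by its definition, and the second sum is precisely the cross-term appearing in the statement, so the identity follows.

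There is no real obstacle here: once one notices that conjugation is additive on the column lengths for the reordering $\bar{\blambda}$, the result is just the bilinear expansion $(\sum_s a_s)^2=\sum_s a_s^2+2\sum_{s<t}a_sa_t$ applied row by row. The only thing to double-check is that the convention of reordering parts does not affect the sums (it does not, because $n(\mu)$ depends only on the multiset of parts when written via the conjugate formula).
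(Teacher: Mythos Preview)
Your proof is correct and follows essentially the same approach as the paper: both use the identity $\bar{\blambda}'_i=\sum_{s}\lambda^{s'}_i$, substitute into $n(\bar{\blambda})=\tfrac12\sum_i(\bar{\blambda}'_i-1)\bar{\blambda}'_i$, and expand the product to separate the diagonal terms (giving $\alpha(\blambda')$) from the cross terms.
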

\begin{proof}
Following the definition of the conjugate partition, we have
$\bar{\blambda}^{'}_i = \sum_{0\leq s \leq l-1}\lambda_i^{s'},$
for all $i \geq 1$. Therefore,
$$\begin{array}{rcl}
n(\bar{\blambda})&=&\displaystyle{\frac{1}{2}\sum_{i \geq 1}(\bar{\blambda}^{'}_i-1)\bar{\blambda}^{'}_i}\\ \smallbreak
  &=& 
\displaystyle\frac{1}{2}\sum_{i \geq 1}\left(\left(\sum_{0\leq s \leq l-1}\lambda_i^{s'}-1\right)\cdot\sum_{0\leq s \leq l-1}\lambda_i^{s'}\right)\\ \smallbreak
&=&\displaystyle\frac{1}{2}\sum_{i \geq 1}
\left( \sum_{0\leq s <t\leq l-1}2\cdot\lambda^{s'}_i\lambda^{t'}_i 
+\sum_{0\leq s \leq l-1}{\lambda_i^{s'}}^2-\sum_{0\leq s \leq l-1}{\lambda_i^{s'}}\right)\\ \smallbreak
&=& \displaystyle\sum_{0\leq s <t\leq l-1}\sum_{i \geq 1}\lambda^{s'}_i\lambda^{t'}_i +
\frac{1}{2} \sum_{0\leq s \leq l-1}\sum_{i \geq 1}(\lambda^{s'}_i-1)\lambda^{s'}_i\\ \smallbreak
&=&\displaystyle\sum_{0\leq s <t\leq l-1}\sum_{i \geq 1}\lambda^{s'}_i\lambda^{t'}_i +\alpha(\blambda')\end{array}
$$

\end{proof}

Hence, to prove Equality (\ref{claim}), it is enough to show that, for all $0\leq s <t\leq l-1$,
\begin{equation}\label{X}
X_{st}^{\blambda}=q^{-\sum_{i\geq 1}\lambda^{s'}_i\lambda^{t'}_i} 
Q_s^{| \lambda^{t}|} Q_t^{|\lambda^{s}|}
\prod_{(i,j) \in [\lambda^{s}]}
 (q^{h_{i,j}^{\lambda^s,\lambda^{t}}}Q_sQ_t^{-1}-1)\cdot 
\prod_{(i,j) \in [\lambda^{t}]}
 (q^{h_{i,j}^{\lambda^t,\lambda^s}}Q_tQ_s^{-1}-1). 
\end{equation}

\begin{abs}
We will proceed by induction on the number of nodes of $\lambda^{s}$. We do not need to do the same for $\lambda^{t}$, because the symmetric formula for the Schur elements given by Theorem \ref{sym} implies the following: if $\bmu$ is the multipartition obtained from $\lambda$ by exchanging 
$\lambda^{s}$ and $\lambda^t$, then
$$X_{st}^{\blambda}(Q_s,Q_t)=X_{st}^{\bmu}(Q_t,Q_s).$$
If $\lambda^{s}=\emptyset$, then
$$\begin{array}{rcl}
X_{st}^{\blambda}&=&\displaystyle \prod_{(i,j) \in [\lambda^{t}]}(q^{j-i}Q_t-Q_s)\\ &=&\displaystyle Q_s^{| \lambda^{t}|}
\prod_{(i,j) \in [\lambda^{t}]}(q^{j-i}Q_tQ_s^{-1}-1)\\
&=&
\displaystyle Q_s^{| \lambda^{t}|}\prod_{1\leq i\leq \lambda^{t'}_1} \prod_{1\leq j \leq \lambda^{t}_i}
(q^{j-i}Q_tQ_s^{-1}-1)\\
 &=&
\displaystyle Q_s^{| \lambda^{t}|}\prod_{1 \leq i \leq \lambda^{t'}_1} \prod_{1\leq j \leq \lambda^{t}_i}
(q^{\lambda^{t}_i-j+1-i}Q_tQ_s^{-1}-1)\\
&=&
\displaystyle Q_s^{| \lambda^{t}|}\prod_{(i,j) \in [\lambda^{t}]}
 (q^{h_{i,j}^{\lambda^t,\lambda^{s}}}Q_tQ_s^{-1}-1),\end{array}
$$
as required.
\end{abs}
\begin{abs}
Now assume that our assertion holds when $\#[\lambda^{s}] \in \{0,1,2,\ldots,N-1\}$. We want to show that it also holds when $\#[\lambda^{s}]=N \geq 1$.
If $\lambda^{s}\neq \emptyset$, then there exists $i$ such that $(i,\lambda_i^{s})$ is a removable node of $\lambda^{s}$. Let $\bnu$ be the multipartition defined by
$$\nu^{s}_i:=\lambda^{s}_i-1,\,\,\, \nu^s_j:=\lambda^s_j \text{ for all } j\neq i,\,\,\, \nu^t:=\lambda^t \text{ for all } t\neq s.$$
Then $[\lambda^s]=[\nu^s] \cup \{(i,\lambda^s_i)\}$. Since Equation $(\ref{X})$ holds for
$X_{st}^{\bnu}$ and 
$$X_{st}^{\blambda}=X_{st}^{\bnu} \cdot \left((q^{\lambda^s_i-i}Q_s-q^{\lambda_1^t}Q_t) 
\prod_{1\leq k \leq \lambda_1^t}
\frac{q^{\lambda^s_i-i}Q_s-q^{k-1-\lambda_k^{t'}}Q_t}{q^{\lambda^s_i-i}Q_s-q^{k-\lambda_k^{t'}}Q_t}\right),$$
it is enough to show that (to simplify notation, from now on set $\lambda:=\lambda^s$ and $\mu:=\lambda^t$):
\begin{equation}\label{add1}
(q^{\lambda_i-i}Q_s-q^{\mu_1}Q_t)
\prod_{1\leq k \leq \mu_1}
\frac{q^{\lambda_i-i}Q_s-q^{k-1-\mu_k'}Q_t}{q^{\lambda_i-i}Q_s-q^{k-\mu_k'}Q_t}
=
q^{-\mu_{\lambda_i}'}
Q_t  (q^{\lambda_i-i+\mu_{\lambda_i}'-\lambda_i+1}Q_sQ_t^{-1}-1)\cdot A \cdot B,
\end{equation}
where
$$A:=
\prod_{1\leq k\leq \lambda_i-1}
\frac{q^{\lambda_i-i+\mu_k'-k+1}Q_sQ_t^{-1}-1}
{q^{\lambda_i-i+\mu_k'-k}Q_sQ_t^{-1}-1}$$
and
$$B:=\prod_{1\leq k \leq \mu_{\lambda_i}'}
\frac{q^{\mu_k-k+\lambda_{\lambda_i}'-\lambda_i+1}Q_tQ_s^{-1}-1}
{q^{\mu_k-k+\lambda_{\lambda_i}'-\lambda_i}Q_tQ_s^{-1}-1}.$$
\\
Note that, since $(i,\lambda_i)$ is a removable node of $\lambda$, we have $\lambda_{\lambda_i}'=i$.
We have that
$$A=q^{\lambda_i-1}
\prod_{1 \leq k \leq \lambda_i-1}
\frac{q^{\lambda_i-i}Q_s-q^{k-1-\mu_k'}Q_t}
{q^{\lambda_i-i}Q_s-q^{k-\mu_k'}Q_t}.$$
Moreover, by Lemma \ref{conj cont}, for $y=q^{i-\lambda_i}Q_tQ_s^{-1},$ we obtain that
$$B=\frac{(q^{\mu_1+i-\lambda_i}Q_tQ_s^{-1}-1)}
{(q^{-\mu_{\lambda_i}'+\lambda_i-1+i-\lambda_i}Q_tQ_s^{-1}-1)}\cdot
\left(\prod_{\lambda_i\leq k \leq \mu_{1}}
\frac{q^{-\mu_k'+k-1+i-\lambda_i}Q_tQ_s^{-1}-1}
{q^{-\mu_k'+k+i-\lambda_i}Q_tQ_s^{-1}-1}\right),$$
{i.e.,}
$$B=Q_t^{-1}q^{\mu_{\lambda_i}'-\lambda_i+1}
\frac{(q^{\lambda_i-i}Q_s-q^{\mu_1}Q_t)}
{(q^{\mu_{\lambda_i}'-\lambda_i+1+\lambda_i-i}Q_sQ_t^{-1}-1)}\cdot
\left(\prod_{\lambda_i\leq k \leq \mu_{1}}
\frac{q^{\lambda_i-i}Q_s-q^{k-1-\mu_k'}Q_t}{q^{\lambda_i-i}Q_s-q^{k-\mu_k'}Q_t}\right).$$
\\
Hence, Equality (\ref{add1}) holds.
\end{abs}

\begin{section}{First consequences}
We give here several direct applications of Formula (\ref{claim}) obtained in Theorem \ref{canfreeform}.

\begin{abs}
A first  application of Formula (\ref{claim}) is that we can easily recover a
well-known semisimplicity criterion  for the Ariki-Koike algebra due to Ariki \cite{Ar}. To do this, 
 let us assume that $q,\,Q_0,\,\ldots,\,Q_{l-1}$ are indeterminates and $R=\mathbb{Q}(q,Q_0,\,\ldots,\,Q_{l-1})$. Then the resulting  ``generic'' Ariki-Koike algebra  
$  \mathcal{H}^{\bf q}_{n}$ is split semisimple. Now
    assume  that $\theta :  \mathbb{Z}[q^{\pm 1},Q_0^{\pm 1},\,\ldots,\,Q_{l-1}^{\pm 1}] \to \mathbb{K}$ is a specialisation 
     and let $\mathbb{K} \mathcal{H}^{\bf q}_{n}$ be the specialised algebra, where $\mathbb{K}$ is any field.  Note that for all $\blambda\in \Pi^l_n$, we have $s_{\blambda} ({\bf q})\in \mathbb{Z}[q^{\pm 1},Q_0^{\pm 1},\,\ldots,\,Q_{l-1}^{\pm 1}] $.
     Then  by \cite[Theorem 7.2.6]{GePf},
      $\mathbb{K}\mathcal{H}^{\bf q}_n$ is (split) semisimple if and only if, 
        for all $\blambda  \in \Pi^l_n$, we have $\theta (s_{\blambda} ({\bf q}))\neq 0$.
 From this,         we can deduce the following:
 
  \begin{Th}[Ariki]\label{ssimple}
Assume that $\mathbb{K}$ is a field. The algebra  $\mathbb{K} \mathcal{H}^{\bf q}_{n}$ is (split) semisimple if and only if $\theta ( P({\bf q}))\neq 0$,  where 
$$P({\bf q})=\prod_{1\leq i \leq n}(1+q+\cdots+q^{i-1}) \prod_{0 \leq s <t \leq l-1}\prod_{-n<k<n}(q^kQ_s-Q_t)$$

\end{Th}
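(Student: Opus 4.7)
The plan is to combine Theorem~\ref{canfreeform} with Geck's general criterion \cite[Theorem 7.2.6]{GePf}: since $\tau$ is symmetrising on $\mathcal{H}^{\bf q}_n$, the specialised algebra $\mathbb{K}\mathcal{H}^{\bf q}_n$ is split semisimple if and only if $\theta(s_{\blambda}({\bf q}))\neq 0$ for every $\blambda\in\Pi_n^l$. The strategy is therefore to read off the irreducible factors of the Schur elements from (\ref{claim}) and to show that their union, as $\blambda$ varies over $\Pi_n^l$, coincides with the factorisation of $P({\bf q})$ (up to invertible scalars in $R$).

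By (\ref{claim}), after discarding the unit prefactor $(-1)^{n(l-1)}q^{-n(\bar\blambda)}$, each Schur element is a product of two kinds of factors: classical quantum integers $[h]_q$ with $h=h_{i,j}^{\lambda^s,\lambda^s}$, and mixed factors $q^{h}Q_sQ_t^{-1}-1$ with $h=h_{i,j}^{\lambda^s,\lambda^t}$ and $s\neq t$. Since each $\theta(Q_i)$ is invertible, a mixed factor vanishes exactly when $\theta(q^{h}Q_s-Q_t)=0$, and the identity $q^{h}Q_s-Q_t=-q^{h}(q^{-h}Q_t-Q_s)$ allows one to restrict to the case $s<t$ (after possibly substituting $h\mapsto -h$). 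It thus suffices to determine the exact sets of values attained by the classical and generalised hook lengths.

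The classical hook lengths of partitions of at most $n$ boxes are bounded above by $n$, and every value in $\{1,\dots,n\}$ is realised (for $l\geq 2$ take $\lambda^0=(h)$ and $\lambda^1=(n-h)$; for $l=1$ this is a standard fact), which matches $\prod_{i=1}^n(1+q+\cdots+q^{i-1})$ in $P({\bf q})$. For the generalised lengths, I decompose $h_{i,j}^{\lambda^s,\lambda^t}=(\lambda_i^s-j+1)+((\lambda^t)_j'-i)$; the first summand lies in $[1,|\lambda^s|]$ and the second in $[-\ell(\lambda^s),\ell(\lambda^t)-1]\subseteq[-|\lambda^s|,|\lambda^t|-1]$. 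Using $|\lambda^s|+|\lambda^t|\leq n$, these combine to give $|h_{i,j}^{\lambda^s,\lambda^t}|\leq n-1$. Conversely, every value $k\in\{-(n-1),\dots,n-1\}$ is realised: for $1\leq k\leq n-1$ take $\lambda^s=(k)$, $\lambda^t=(n-k)$ and evaluate at $(i,j)=(1,1)$; for $-(n-1)\leq k\leq 0$ take $\lambda^s=(1^n)$, $\lambda^t=\emptyset$ and evaluate at $(1-k,1)$. Combined with the symmetry above, this shows that the mixed factors contribute exactly $\prod_{0\leq s<t\leq l-1}\prod_{-n<k<n}(q^kQ_s-Q_t)$.

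The main obstacle is the bookkeeping of the range of $h_{i,j}^{\lambda^s,\lambda^t}$, since the naive estimates on $\lambda_i^s-i$ and $(\lambda^t)_j'-j$ taken separately are too crude; one must exploit the global constraint $|\lambda^s|+|\lambda^t|\leq n$ together with $(i,j)\in[\lambda^s]$. Once the two range computations are in place, the factors of $P({\bf q})$ match exactly those arising in the Schur elements, and Geck's criterion yields the claim.
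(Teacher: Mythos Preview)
Your proof is correct and follows essentially the same approach as the paper: both use Geck's criterion together with the cancellation-free formula~(\ref{claim}), showing that the irreducible factors of the Schur elements, as $\blambda$ ranges over $\Pi_n^l$, are exactly the factors of $P(\mathbf{q})$. The only differences are cosmetic: the paper treats the two directions separately with slightly different witness multipartitions (e.g.\ $\lambda^s=(n)$, $\lambda^t=\emptyset$ at the node $(1,n-k)$ rather than your $\lambda^s=(k)$, $\lambda^t=(n-k)$ at $(1,1)$), and it simply asserts the bound $-n<h_{i,j}^{\lambda^s,\lambda^t}<n$ where you give a short justification.
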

\begin{proof}
Assume first that $\theta (P({\bf q}))=0 $. We distinguish three cases:
\begin{enumerate}[(a)]
\item If there exists $2\leq i \leq n$ such that $\theta (1+q+\ldots+q^{i-1})=0$, then we have $\theta ( [h^{\lambda^0,\lambda^0}_{1,n-i+1}]_q)=0$ for  $\blambda=((n),\emptyset,\ldots,\emptyset) \in \Pi^l_n$. Thus, for this $l$-partition, we have $\theta (s_{\blambda} ({\bf q}))=0$, which implies that  $\mathbb{K} \mathcal{H}^{\bf q}_{n}$ is not semisimple.
\item If there exist $0\leq s < t \leq l-1$ and $0 \leq k <n $
  such that $\theta(q^kQ_s-Q_t )=0$, then we have $\theta ( q^{h_{1,n-k}^{\lambda^s,\lambda^t}} Q_sQ_t^{-1}-1)=0$ for  $\blambda\in \Pi^l_n$  such that $\lambda^s=(n)$, $\lambda^t=\emptyset$.
  We have $\theta (s_{\blambda} ({\bf q}))=0$ and  $\mathbb{K} \mathcal{H}^{\bf q}_{n}$ is not semisimple.
 \item If there exist $0\leq s < t \leq l-1$ and $-n<k<0$  such that $\theta ( q^kQ_s-Q_t )=0 $, then we have $\theta ( q^{h_{1,n+k}^{\lambda^t,\lambda^s}}Q_tQ_s^{-1}-1)=0$ for  $\blambda\in \Pi^l_n$  such that $\lambda^s=\emptyset$, $\lambda^t=(n)$. Again, we have $\theta (s_{\blambda} ({\bf q}))=0$ and  $\mathbb{K} \mathcal{H}^{\bf q}_{n}$ is not semisimple.
 \end{enumerate}
Conversely, if  $\mathbb{K} \mathcal{H}^{\bf q}_{n}$ is not semisimple, then  there exists $\blambda  \in \Pi^l_n$ such that  $\theta (s_{\blambda} ({\bf q}) )= 0$. As 
for all $0\leq s,t\leq l-1$ and $(i,j)\in [\lambda^s]$, we have $-n<h_{i,j}^{\lambda^s,\lambda^t}<n$, we conclude that $\theta ( P({\bf q}))=0$.
\end{proof}
\end{abs}

\begin{abs}\label{set} We now consider a remarkable specialisation of the generic Ariki-Koike algebra. 
Let $u$ be an indeterminate. Let $r\in \mathbb{Z}_{>0}$ and let $r_0, \ldots, r_{l-1}$ be any integers. Set ${\bf r}:=(r_0,\ldots,r_{l-1})$ and $\eta_l:=\text{exp}(2\sqrt{-1}\pi/l)$. 
 For all $i=0,\ldots,l-1$, we set $m_i:=r_i/r$ and we define ${\bf m}:=(m_0,\ldots,m_{l-1})\in \mathbb{Q}^l$.  
  Assume that  $R=\mathbb{Z}[q^{\pm 1}, Q_0^{\pm 1},\ldots,Q_{l-1}^{\pm 1}]$ and consider the morphism
  $$\theta :R \to \mathbb{Z}[\eta_l][u^{\pm 1}]$$
  such that $\theta(q)=u^r$ and $\theta (Q_j)=\eta_l^j u^{r_j}$ for $j=0,1,\ldots,l-1$.
  We will denote by $\mathcal{H}_n^{{\bf m},r}$
  the specialisation of the Ariki-Koike algebra $\mathcal{H}^{\bf q}_n$
  via $\theta$. The algebra $\mathcal{H}_n^{{\bf m},r}$
   is called  a {\em cyclotomic Ariki-Koike algebra}. It is defined over $\mathbb{Z}[\eta_l][u^{\pm 1}]$
 and has   a presentation as follows:
\begin{itemize}
\item generators: $T_0$, $T_1$,\ldots, $T_{n-1}$,
\item relations: $$\begin{array}{rl}
(T_0-u^{r_0})(T_0-\eta_l u^{r_1})\cdots (T_0-\eta_l^{l-1} u^{r_{l-1}})=0& \\ &\\
(T_j-u^r)(T_j+1)=0& \textrm{for } j=1,...,n-1
\end{array}$$
and the braid relations symbolised by the  diagram
\begin{center}
\begin{picture}(250,17)
\put( 40, 03){\circle*{5}}
\put( 40, 03){\line(1,0){30}}
\put( 54, 05){$\scriptstyle{4}$}
\put( 70, 03){\circle*{5}}
\put( 70, 03){\line(1,0){30}}
\put(100, 03){\circle*{5}}
\put(100, 03){\line(1,0){20}}
\put(135, 00){$\cdot$}
\put(145, 00){$\cdot$}
\put(155, 00){$\cdot$}
\put(170, 03){\line(1,0){20}}
\put(190, 03){\circle*{5}}
\put( 37, 10){$T_0$}
\put( 67, 10){$T_1$}
\put( 97, 10){$T_2$}
\put(187, 10){$T_{n-1}$}
\end{picture}.
\end{center}
\end{itemize}

We set $K:=\mathbb{Q}(\eta_l)$. The algebra $K(u)\mathcal{H}_n^{{\bf m},r}$, which is obtained by extension of scalars to    $K(u)$,   is a  split semisimple algebra. As a consequence, 
one can apply Tits's Deformation Theorem (see, for example, \cite[\S 7.4]{GePf}), and obtain that
  $$\text{Irr}(K(u)\mathcal{H}_n^{{\bf m},r})=\{ V^{\blambda}\ |\ \blambda\in \Pi^l_n\}.$$ 

Using the Schur elements,
one can attach to every simple $K(u)\mathcal{H}_n^{{\bf m},r}$-module
  $V^{\blambda}$  a rational number ${\bf a}^{({\bf m},r)} ({\blambda})$, by setting
   ${\bf a}^{({\bf m},r)} ({\blambda})$ to be the
the negative of the valuation
  of the {Schur element} of $V^{\blambda}$ in $u$, that is, the
  negative of the valuation of $\theta (s_{\blambda} ({\bf q}))$.
We call this number
 the ${\bf a}$-{value of}   ${\blambda}$ .  
 By \cite[\S 5.5]{book}, this value may be easily computed combinatorially:
Let $\blambda\in \Pi^l_n$ and let $s\in \mathbb{Z}_{>0}$ be such that 
\begin{center}
$s\geq \text{min} \{i\in \mathbb{Z} \ |\ \forall j\in \{0,1,\ldots,l-1\},\ \lambda_i^j=0\}$.
\end{center} 
  Let $\mathfrak{B}$ be the shifted ${\bf m}$-symbol of $\blambda$ of size $s\in  \mathbb{Z}_{>0}$. 
      This is the $l$-tuple $(\mathfrak{B}^0,\ldots,\mathfrak{B}^{l-1})$
    where, for all $j=0,\ldots,l-1$ and  for all $i=1,\ldots, s+[m_j]$ (where $[m_j]$ denotes
     the integer part of $m_j$), we have
\begin{center}
$\mathfrak{B}^j_i=\lambda^j_i-i+s+m_{j}$ \,\,and\,\, $\mathfrak{B}^j=(\mathfrak{B}_{s+[m_j]}^j,\ldots,\mathfrak{B}_{1}^j)$ .
\end{center}
   Write 
   $$\kappa_1\geq \kappa_2 \geq \cdots \geq \kappa_h$$
   for the elements of $\mathfrak{B}$ written in decreasing order (allowing repetitions), where $h=ls+\sum_{0\leq j \leq l-1}[m_j]$. Let $\kappa_{\bf m}(\blambda)=(\kappa_1,\ldots,\kappa_h)\in \mathbb{Q}_{\geq 0}^h$ 
     and define 
   $$n_{\bf m}(\blambda):=\sum_{1\leq i\leq h}  (i-1)\kappa_i.$$
   Then, by  \cite[Proposition 5.5.11]{book}, the ${\bf a}$-value of $\blambda$ is :
   $${\bf a}^{({\bf m},r)} (\blambda)=r(n_{\bf m}(\blambda)-n_{\bf m}(\emptyset)).$$
   
   Generalising the dominance order for partitions, we will write 
   $\kappa_{\bf m}(\blambda)\rhd \kappa_{\bf m}(\bmu)$
   if $\kappa_{\bf m}(\blambda)\neq \kappa_{\bf m}(\bmu)$
   and $\sum_{1\leq i \leq t}\kappa_i(\blambda)\geq\sum_{1\leq i \leq t } \kappa_i(\bmu)$
 for all $t \geq 1$.
The following result  \cite[Proposition 5.5.16]{book} will be useful in then next sections:       \end{abs}
    \begin{Prop}\label{computatin}
    Assume that $\blambda$ and $\bmu$ are two $l$-partitions with the same rank  such that $\kappa_{\bf m}(\blambda)\rhd \kappa_{\bf m}(\bmu)$.
     Then ${\bf a}^{({\bf m},r)} (\bmu)>{\bf a}^{({\bf m},r)} (\blambda)$. 
    \end{Prop}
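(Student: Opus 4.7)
The plan is to reduce the proposition to a standard Abel summation argument showing that the dominance order on partitions (or tuples of equal sum) reverses the statistic $n(\lambda)=\sum(i-1)\lambda_i$. Throughout, write $h=ls+\sum_{0\le j\le l-1}[m_j]$, choose $s$ large enough to define the shifted $\mathbf{m}$-symbol of both $\blambda$ and $\bmu$, and let $(a_1,\dots,a_h)=\kappa_{\bf m}(\blambda)$ and $(b_1,\dots,b_h)=\kappa_{\bf m}(\bmu)$, both arranged in weakly decreasing order. Since $r>0$ and $n_{\bf m}(\emptyset)$ depends only on $\mathbf m$ and $s$, it suffices to prove that $n_{\bf m}(\blambda)<n_{\bf m}(\bmu)$, i.e.\ $\sum_i(i-1)a_i<\sum_i(i-1)b_i$.

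First I would verify the normalisation
\[
\sum_{i=1}^h a_i \;=\; \sum_{i=1}^h b_i.
\]
Unpacking the definition $\mathfrak{B}^j_i=\lambda^j_i-i+s+m_j$ gives
\[
\sum_{i=1}^h \kappa_i(\blambda)=\sum_{j=0}^{l-1}|\lambda^j|+\sum_{j=0}^{l-1}\sum_{i=1}^{s+[m_j]}(s+m_j-i),
\]
and the second sum depends only on $s$ and $\mathbf m$. Since $\blambda$ and $\bmu$ have the same rank, $\sum|\lambda^j|=\sum|\mu^j|=n$, so $\sum a_i=\sum b_i$.

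Next, set $A_t:=\sum_{i=1}^t a_i$ and $B_t:=\sum_{i=1}^t b_i$. The hypothesis $\kappa_{\bf m}(\blambda)\rhd\kappa_{\bf m}(\bmu)$ gives $A_t\ge B_t$ for all $t\ge1$, with strict inequality for at least one $t<h$ (equality for all $t$ would force $a_i=b_i$ for all $i$), while $A_h=B_h$ by the previous paragraph. An Abel summation yields
\[
\sum_{i=1}^h(i-1)a_i \;=\; (h-1)A_h-\sum_{i=1}^{h-1}A_i,
\]
and similarly for $b_i$. Subtracting,
\[
n_{\bf m}(\bmu)-n_{\bf m}(\blambda) \;=\; \sum_{i=1}^{h-1}\bigl(A_i-B_i\bigr) \;>\; 0,
\]
since every summand is non-negative and at least one is strictly positive. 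Multiplying by $r>0$ gives ${\bf a}^{({\bf m},r)}(\bmu)>{\bf a}^{({\bf m},r)}(\blambda)$, as required.

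The only step requiring genuine care, rather than routine manipulation, is the invariance $\sum a_i=\sum b_i$: one must observe that the reordering to form $\kappa_{\bf m}$ does not affect the total, and that the non-partition contribution $\sum_j\sum_i(s+m_j-i)$ is independent of the multipartition. Once that is in place, the Abel summation identity and dominance hypothesis finish the argument immediately.
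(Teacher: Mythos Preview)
Your argument is correct. The paper does not actually prove this proposition: it is quoted from \cite[Proposition 5.5.16]{book} without proof, so there is no in-paper argument to compare against. Your Abel summation approach is the standard way to show that dominance on sequences of equal total reverses the weighted sum $\sum_i(i-1)a_i$, and all the steps (equal totals via the rank hypothesis, the summation-by-parts identity, strict inequality surviving because $A_h=B_h$ forces a strict gap at some $t<h$) are handled correctly.
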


Now, Formula (\ref{claim}) allows us to give an alternative description of the ${\bf a}$-value of $\blambda$:
  \begin{Prop}
    Let  $\blambda\in \Pi^{l}_n$. The ${\bf a}$-value of $\blambda$ is  
     $${\bf a}^{({\bf m},r)} (\blambda)=r \left(n(\overline{\blambda})-\sum_{0\leq s\leq l-1} \sum_{(i,j)\in [\lambda^s]} \sum_{0\leq t\leq l-1, t\neq s} \operatorname{min }(h^{\lambda^s,\lambda^t}_{i,j}+m_s-m_t,0)\right).$$
    \end{Prop}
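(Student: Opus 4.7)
The plan is to apply the specialisation $\theta$ to the cancellation-free Formula~(\ref{pretty}) from Theorem~\ref{canfreeform}, compute the $u$-valuation of each factor, and collect the results. Recall that $\theta(q)=u^r$ and $\theta(Q_j)=\eta_l^j u^{r_j}$, so $\theta(Q_sQ_t^{-1})=\eta_l^{s-t}u^{r(m_s-m_t)}$, and that ${\bf a}^{({\bf m},r)}(\blambda)$ is by definition the negative of $v_u\bigl(\theta(s_{\blambda}({\bf q}))\bigr)$.

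First I would deal with the two simple prefactors. The factor $q^{-n(\bar{\blambda})}$ specialises to $u^{-rn(\bar{\blambda})}$, contributing valuation $-r\,n(\bar{\blambda})$. The factor $(q-1)^{-n}$ specialises to $(u^r-1)^{-n}$, whose constant term is $(-1)^{-n}\neq 0$, contributing valuation $0$. The sign $(-1)^{n(l-1)}$ obviously contributes $0$.

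Next I would treat the generic factor $q^{h_{i,j}^{\lambda^s,\lambda^t}}Q_sQ_t^{-1}-1$, which specialises to
\[
\zeta\, u^{N}-1, \qquad \text{where } \zeta=\eta_l^{s-t} \text{ and } N=r\bigl(h_{i,j}^{\lambda^s,\lambda^t}+m_s-m_t\bigr)\in\mathbb{Z}.
\]
The key elementary observation is that, whenever $\zeta u^N\neq 1$ (i.e.\ not $\zeta=1$ and $N=0$ simultaneously), one has $v_u(\zeta u^N-1)=\min(N,0)$: indeed, if $N>0$ the constant term is $-1$; if $N<0$ the leading low-order term is $\zeta u^N$; and if $N=0$ with $\zeta\neq 1$ the constant term is $\zeta-1\neq 0$. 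For $s=t$ we have $\zeta=1$ and $N=r\,h_{i,j}^{\lambda^s,\lambda^s}>0$, so the contribution is $0$; this is why only the off-diagonal terms $s\neq t$ will survive. For $s\neq t$, we have $0\leq s,t\leq l-1$ with $s\neq t$, so $\zeta=\eta_l^{s-t}\neq 1$ and the formula $v_u=\min(N,0)$ applies unconditionally. Using that $r>0$, one has $\min(rx,0)=r\min(x,0)$, so each such factor contributes $r\min(h_{i,j}^{\lambda^s,\lambda^t}+m_s-m_t,0)$.

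Finally, summing the valuations of all factors via $v_u(ab)=v_u(a)+v_u(b)$ and negating gives the claimed identity. The only real care needed is the root-of-unity check in step three (to rule out accidental cancellation of the constant term when $N=0$), which is automatic because $\eta_l^{s-t}\neq 1$ for $s\neq t$ in the allowed range; there is no serious obstacle beyond this bookkeeping.
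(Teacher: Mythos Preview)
Your proposal is correct and follows precisely the approach the paper intends: the paper does not spell out a proof but simply states that Formula~(\ref{claim}) allows this description of the ${\bf a}$-value, and your argument is exactly the direct computation of the $u$-valuation of each factor in the specialised cancellation-free formula. The only point you might add for completeness is that $N=r\,h_{i,j}^{\lambda^s,\lambda^t}+r_s-r_t$ is indeed an integer (since the $r_j$ are integers), but you already asserted this and the verification is trivial.
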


    \begin{abs}
We  now consider another type of specialisation. Let $v_0, \ldots, v_{l-1}$ be any integers. Let $k$ be a subfield of $\mathbb{C}$ and let $\eta$ be a primitive root of unity of order $e>1$. 
Assume that  $R=\mathbb{Z}[q^{\pm 1}, Q_0^{\pm 1},\ldots,Q_{l-1}^{\pm 1}]$ and consider the morphism
 $$\theta : R  \to k(\eta) $$ such that
  $\theta (q)=\eta$ and $\theta(Q_j)=\eta^{v_j}$ for $j=0,1,\ldots, l-1$. By Theorem \ref{ssimple}, the specialised  algebra  $k(\eta)  \mathcal{H}_n^{\bf q}$  is not generally semisimple, and 
 a result by Dipper and Mathas which will be specified  later (see \S \ref{morita}) implies  that the study of this algebra is enough for studying the non-semisimple 
  representation theory of Ariki-Koike algebras in characteristic $0$. Let 
$$D=([V^{\blambda}:M])_{\blambda\in \Pi^l_n,\,M\in \text{Irr} ( k(\eta)  \mathcal{H}_n^{\bf q})}$$
\\
 be the associated decomposition matrix (see \cite[\S 7.4]{GePf}), which relates the irreducible representations of the split semisimple Ariki-Koike algebra $\mathcal{H}_n^{{\bf q}}$
 and the specialised Ariki-Koike algebra $k(\eta)  \mathcal{H}_n^{{\bf q}}$. We are interested in the classification of the blocks of defect $0$.
  That is, we want to classify the $l$-partitions $\blambda \in \Pi^l_n$ which are alone in their blocks in the decomposition matrix.  These correspond to the modules $V^{\blambda}$ which remain projective and irreducible after the specialisation $\theta$. By  \cite[Lemme 2.6]{MaRou}, these elements are characterized by the property that 
    $\theta (s_{\blambda} ({\bf q}))\neq 0$.
     In our setting, using Formula (\ref{claim}), we obtain the following:
    \begin{Prop}
   Under the above hypotheses,  $\blambda\in \Pi^l_n$ is in a block of defect $0$ if and only if, for all  
    $0\leq s,t\leq l-1$ and $(i,j)\in [\lambda^s]$,      $e$ does not divide $h^{\lambda^s,\lambda^t}_{i,j}+v_s-v_t$. 
    
    \end{Prop}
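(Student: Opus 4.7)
First I would invoke the criterion recalled just before the statement, namely that by \cite[Lemme 2.6]{MaRou}, $\blambda$ lies in a block of defect $0$ if and only if $\theta(s_\blambda({\bf q})) \neq 0$. The entire content of the proposition is therefore to determine the vanishing locus of the Schur element under $\theta$, which is exactly the kind of question Formula (\ref{pretty}) is designed to answer.

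Concretely, the plan is to write $s_\blambda({\bf q})$ in the fully expanded form (\ref{pretty}):
\begin{equation*}
s_{\blambda}({\bf q})=(-1)^{n(l-1)} q^{-n(\bar{\blambda})}(q-1)^{-n}\prod_{0\leq s \leq l-1}\prod_{(i,j)\in[\lambda^s]}\prod_{0\leq t \leq l-1}\bigl(q^{h_{i,j}^{\lambda^s,\lambda^t}}Q_sQ_t^{-1}-1\bigr),
\end{equation*}
and then specialise factor by factor. Since $e>1$, we have $\theta(q-1)=\eta-1\neq 0$, and $\theta(q^{-n(\bar{\blambda})})=\eta^{-n(\bar{\blambda})}$ is a root of unity, hence nonzero; so the prefactor $(-1)^{n(l-1)}\eta^{-n(\bar{\blambda})}(\eta-1)^{-n}$ is a unit of $k(\eta)$ and contributes no zero. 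Therefore $\theta(s_\blambda({\bf q}))$ vanishes if and only if at least one of the factors $q^{h_{i,j}^{\lambda^s,\lambda^t}}Q_sQ_t^{-1}-1$ is sent to $0$ by $\theta$. Under $\theta$, that factor becomes $\eta^{h_{i,j}^{\lambda^s,\lambda^t}+v_s-v_t}-1$, which vanishes precisely when $e$ divides $h_{i,j}^{\lambda^s,\lambda^t}+v_s-v_t$. Taking the contrapositive yields the claim.

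The only step requiring a little care is the assertion that no cancellation can occur among the specialised factors. This is precisely where Theorem \ref{canfreeform} becomes indispensable: the formulas of Theorem \ref{Mathas} and Theorem \ref{sym} involve nontrivial cancellations between numerators and denominators, so one cannot read the zeros of the Schur element off them directly, whereas Formula (\ref{pretty}) expresses $s_\blambda({\bf q})$ as a product of a unit and honest binomials in an integral domain, where a product vanishes iff one of its factors does. Once this observation is in place, the argument is a mechanical substitution and I anticipate no further obstacle.
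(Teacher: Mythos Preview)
Your proposal is correct and follows exactly the line the paper intends: the paper states the proposition as an immediate consequence of the cancellation-free formula and the Malle--Rouquier criterion, without writing out any further details. The only cosmetic difference is that you cite Formula~(\ref{pretty}) with its explicit $(q-1)^{-n}$ prefactor, whereas the paper points to the equivalent Formula~(\ref{claim}) in which that prefactor has already been absorbed into the $[h_{i,j}^{\lambda^s,\lambda^s}]_q$ terms; either version gives the result in the same way.
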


   \begin{Rem} As pointed out by M. Fayers and A. Mathas, the above proposition  should also be obtained using \cite{Fayers}. 
   
   \end{Rem}

\end{abs}

\end{section}
\begin{section}{Canonical basic sets for Ariki-Koike algebras}
In this part, we generalise some known results on basic sets for Ariki-Koike algebras, using a fundamental result by Dipper and Mathas. This will help us determine 
 the canonical basic sets for cyclotomic Ariki-Koike algebras in full generality.

    \begin{abs}
    We consider the cyclotomic Ariki-Koike algebra $\mathcal{H}_n^{{\bf m},r}$ defined in \S\ref{set}, replacing from now on the indeterminate $u$ by the indeterminate $q$ (following the usual notation). Let  $\theta : \mathbb{Z}[\eta_l][q^{\pm 1}] \to K(\eta) $ be a specialisation such that $\theta (q)=\eta\in \mathbb{C}^*$.
   We obtain  a specialised Ariki-Koike algebra   $K(\eta)  \mathcal{H}_n^{{\bf m},r}$. The relations
    between the generators are the usual braid relations together with the following ones:
$$\begin{array}{rl}
(T_0-\eta^{r_0})(T_0-\eta_l \eta^{r_1})\cdots (T_0-\eta_l^{l-1} \eta^{r_{l-1}})=0& \\ &\\
(T_j-\eta^r)(T_j+1)=0& \textrm{for } j=1,...,n-1.
\end{array}$$
Let 
$$D=([V^{\blambda}:M])_{\blambda\in \Pi^l_n,\,M\in \text{Irr} ( K(\eta)  \mathcal{H}_n^{{\bf m},r})}$$
\\
 be the associated decomposition matrix (see \cite[\S 7.4]{GePf}). The matrix $D$ relates the irreducible representations of the split semisimple Ariki-Koike algebra $K(q)  \mathcal{H}_n^{{\bf m},r}$
 and the specialised Ariki-Koike algebra $K(\eta)  \mathcal{H}_n^{{\bf m},r}$.
  The goal of this section is to study the form of this matrix in full generality.
  
 First assume that $\eta$ is not a root of unity. 
Then, for all $0\leq i\neq j \leq l-1$, we have 
$$\eta_l^{i-j} \eta^{r_i-r_j}\neq \eta^{rd}$$
for all $d\in \mathbb{Z}_{>0}$. By the criterion of semisimplicty due to Ariki (Theorem \ref{ssimple}), this implies that the algebra $K(\eta)  \mathcal{H}_n^{{\bf m},r}$  is split semisimple,
 and thus $D$ is the identity matrix. Hence, from now, one may assume that $\eta$ is a primitive root of unity of order $e>1$. Then there exists $k\in \mathbb{Z}_{>0}$ such that 
 $\mathrm{gcd}(k,e)=1$ and $\eta=\text{exp}(2\sqrt{-1}\pi k/e)$.

\end{abs}

\begin{abs}\label{morita}
We will now use a reduction theorem by  Dipper and Mathas which will help us understand the form of $D$.  
 Set ${\bf I}=\{0,1,\ldots,l-1\}$. 
   There is a partition
  $${\bf I}={\bf I}_1\sqcup {\bf I}_2 \sqcup \ldots \sqcup {\bf I}_p$$
  such that
  $$ \prod_{1 \leq \alpha < \beta\leq p} \prod_{(i,j) \in {\bf I}_\alpha \times {\bf I}_\beta}
  \prod_{-n<d<n} (\eta^{rd}-\eta_l^{i-j}\eta^{r_i-r_j}) \neq 0.$$
For all $i=1,\ldots,p$, we set  $l_i:= | {\bf I}_i|$ and we consider ${\bf I}_i$ as an ordered set
$${\bf I}_{i}=({i_1}, {i_{2}},\ldots,{i_{l_i}})  \text{ with } i_1<i_2<\cdots<{i_{l_i}}.$$
We define
   $$\begin{array}{lccc}
    \pi_i : & \mathbb{Q}^l & \to & \mathbb{Q}^{l_i} \\
     & (x_0,x_1,\ldots,x_{l-1}) & \mapsto & (x_{i_1}, x_{i_{2}},\ldots,x_{i_{l_i}})
     \end{array}$$
         
For $n_i\in \mathbb{Z}_{\geq 0}$,   we have an Ariki-Koike algebra  of type $G(l_i,1,n_i)$ which we denote by  ${\bf H}_{n_i}^{{\bf m}^i,r}$ with
   ${\bf m}^i:=\pi_i ({\bf m})=(m_{i_1}, m_{i_{2}},\ldots,m_{i_{l_i}})$. The relations
    between the generators are the usual braid relations together with the following ones:
$$\begin{array}{rl}
(T_0-\eta_l^{i_1} q^{r_{i_1}})(T_0-\eta^{i_2}_l q^{r_{i_2}})\cdots (T_0-\eta_l^{i_{l_i}} q^{r_{i_{l_i}}})=0& \\ &\\
(T_j-q^r)(T_j+1)=0& \textrm{for } j=1,...,n_i-1.
\end{array}$$
   Note however that   ${\bf H}_{n_i}^{{\bf m}^i,r}$ is not a cyclotomic Ariki-Koike algebra in general, because $l_i\neq l$.
       The specialisation $\theta_i : \mathbb{Z}[\eta_{l}][q^{\pm 1}] \to K(\eta) $
    such that $\theta (q)=\eta$  defines a specialised algebra  $K(\eta)  {\bf H}_{n_i}^{{\bf m}^i,r}$, and we have  an 
    associated decomposition matrix 
    $$D^i_{n_i}=([V^{\blambda}:M])_{\blambda\in \Pi^{l_i}_{n_i},\,M\in \text{Irr} ( K(\eta)  {\bf H}_{n_i}^{{\bf m}^i,r})}.$$
   
   In \cite{DiMa}, Dipper and Mathas have shown that $K(\eta)  \mathcal{H}_n^{{\bf m},r}$  is Morita equivalent to the algebra
   $$\bigoplus_{
   \begin{array}{c}
   n_1,\ldots,n_p \geq 0\\
   n_1+\cdots+n_p=n
   \end{array}} K(\eta)  {\bf H}_{n_1}^{{\bf m}^1,r}\otimes_{K(\eta)} K(\eta)  {\bf H}_{n_2}^{{\bf m}^2,r}\otimes_{K(\eta)}    \cdots \otimes_{K(\eta)}  K(\eta)  {\bf H}_{n_p}^{{\bf m}^p,r}.$$
   Thus, for a suitable ordering of the rows and columns, $D$ has the form of a block diagonal matrix where 
 each block is given by $D^1_{n_1}\otimes \cdots \otimes D^p_{n_p}$ with $n_1+\ldots +n_p=n$. More precisely, we have the following result (\cite[Proposition 4.11]{DiMa}):
\begin{Th}[Dipper-Mathas]\label{Dimathas}
Let $\blambda \in \Pi^l_n$ and $M\in \operatorname{Irr} ( K(\eta)  \mathcal{H}_{n}^{{\bf m},r})$. There exist integers $n_1,\ldots,n_p \geq 0$ with $n_1+\cdots+n_p=n$, and
$M_i\in \operatorname{Irr} ( K(\eta)  \mathcal{H}_{  n_i   }^{{\bf m}^i,r})$
such that 
 $$[V^{\blambda}:M]=
 \left\{\begin{array}{ll}
 \prod_{1\leq i \leq p}  [V^{\pi_{i}(\blambda)}:M_i],& \text{ if }\, \pi_{i}(\blambda) \in \Pi_{n_i}^{l_i}\,\,
 \forall i \in [1,p]
 \\ &\\
 0,& \text{ otherwise.}
  \end{array}\right.$$
 \end{Th}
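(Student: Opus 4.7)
The plan is to bootstrap from the Dipper--Mathas Morita equivalence recalled just above, combined with the standard multiplicativity of decomposition numbers under outer tensor products of split algebras over a field.

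First, I would translate the statement through the Morita equivalence. Since such an equivalence is exact, it preserves composition series and hence decomposition numbers. Because $K(\eta)$ is a splitting field for each factor ${\bf H}_{n_i}^{{\bf m}^i,r}$, the simple modules of the tensor product
\[ K(\eta){\bf H}_{n_1}^{{\bf m}^1,r} \otimes_{K(\eta)} \cdots \otimes_{K(\eta)} K(\eta){\bf H}_{n_p}^{{\bf m}^p,r} \]
are precisely the outer tensor products $M_1 \boxtimes \cdots \boxtimes M_p$ of simples of the factors. Thus any simple $M$ of $K(\eta)\mathcal{H}_n^{{\bf m},r}$ determines, and is determined by, a composition $(n_1,\ldots,n_p)$ of $n$ (recording which summand it sits in) together with simples $M_i \in \mathrm{Irr}(K(\eta){\bf H}_{n_i}^{{\bf m}^i,r})$.

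Next, I would locate the Specht module $V^{\blambda}$ inside this decomposition. The partition ${\bf I}={\bf I}_1\sqcup\cdots\sqcup{\bf I}_p$ is chosen precisely so that the parameters $\eta_l^{i}\eta^{r_i}$ coming from distinct blocks lie in different $\eta^r$-orbits, which means they are separated by the cyclotomic Jucys--Murphy elements acting on semisimple Specht modules. Using this separation, the idempotents realising the Morita equivalence in \cite{DiMa} cut $V^{\blambda}$ out of exactly one summand, namely the one indexed by the composition $(|\pi_1(\blambda)|,\ldots,|\pi_p(\blambda)|)$, and inside this summand $V^{\blambda}$ is identified with $V^{\pi_1(\blambda)}\boxtimes\cdots\boxtimes V^{\pi_p(\blambda)}$. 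If the composition $(n_1,\ldots,n_p)$ attached to $M$ does not match this one, then $V^{\blambda}$ and $M$ live in orthogonal summands and $[V^{\blambda}:M]=0$; otherwise, the Jordan--H\"older multiplicativity
\[ [V^{\pi_1(\blambda)}\boxtimes\cdots\boxtimes V^{\pi_p(\blambda)} : M_1\boxtimes\cdots\boxtimes M_p] = \prod_{i=1}^{p}[V^{\pi_i(\blambda)}:M_i] \]
for tensor products of modules over tensor products of split algebras delivers the stated formula.

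The main obstacle is the identification of $V^{\blambda}$ with an outer tensor product under the equivalence. One must verify that the central idempotents of Dipper and Mathas, built from residue data for the Jucys--Murphy elements, respect the Specht filtration in such a way that the image of $V^{\blambda}$ really is $V^{\pi_1(\blambda)}\boxtimes\cdots\boxtimes V^{\pi_p(\blambda)}$ and not merely some module with the same composition factors. This requires tracking the action on a basis of Jucys--Murphy eigenvectors for $V^{\blambda}$ and checking that the idempotent projections pick out exactly the eigenvectors whose residues correspond to the blocks ${\bf I}_i$. Once this compatibility is established, both the vanishing case and the product formula follow formally from the general principles recalled above.
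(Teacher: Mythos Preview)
The paper does not give its own proof of this theorem: it is quoted verbatim as \cite[Proposition~4.11]{DiMa} and used as a black box. So there is no argument in the paper to compare your proposal against.

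That said, your sketch is a faithful outline of how the result is actually obtained in \cite{DiMa}: one first establishes the Morita equivalence via a family of orthogonal idempotents (built from residue/content data for the Jucys--Murphy elements), then identifies the image of each Specht module $V^{\blambda}$ with the outer tensor product $V^{\pi_1(\blambda)}\boxtimes\cdots\boxtimes V^{\pi_p(\blambda)}$ in the summand indexed by $(|\pi_1(\blambda)|,\ldots,|\pi_p(\blambda)|)$, and finally reads off the decomposition numbers from the standard multiplicativity for split tensor products. You have also correctly flagged the one genuinely nontrivial step, namely verifying that the idempotents send Specht modules to outer tensor products of Specht modules (and not just to modules with the right composition factors); this is precisely the content of the relevant sections of \cite{DiMa} and is not something one can fill in without reproducing a substantial part of that paper.
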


\end{abs}

\begin{abs}\label{s}
We now fix   $i \in \{1,\ldots, p\}$. 
By definition of ${\bf I}_{i}=({i_1}, {i_{2}},\ldots,{i_{l_i}})$, 
one may assume that,  for all $j=1,\ldots,{l_i}$,  there exist $s_j \in \mathbb{Z}$ such that 
$$\eta_l^{i_j-i_1}\eta^{r_{i_j}-r_{i_1}}
=\eta^{rs_j}$$
(with $s_1=0$). 
We have
$$r_{i_j}-r_{i_1}=rs_j -e (i_j-i_1)/(kl),$$
whence we deduce the following relation:
\begin{equation}\label{rela}
m_{i_j}-m_{i_1}=s_j -e (i_j-i_1)/(klr).
\end{equation}
Set ${\bf s}^i:=(s_1,\ldots,s_{l_i})$. 
\end{abs}

\begin{abs} Keeping the above notation, let us consider the Ariki-Koike algebra $K(\eta)  \mathcal{H}_{n_i}^{{\bf m}^i,r}$  of type $G(l_i,1,n_i)$  (with $n_i\leq n$) with relations
$$\begin{array}{rl}
(T_0-\eta^{i_1}_l \eta^{r_{i_1}})(T_0-\eta^{i_2}_l \eta^{r_{i_2}})\cdots (T_0-\eta_l^{i_{l_i}} \eta^{r_{i_{l_i}}})=0& \\&\\
(T_j-\eta^r)(T_j+1)=0& \textrm{for } j=1,...,n_i-1.
\end{array}$$
This is then isomorphic to the Ariki-Koike algebra  with relations
$$\begin{array}{rl}
(T_0-\eta^{r s_{1}})(T_0-\eta^{r s_{2}})\cdots (T_0- \eta^{r {s_{l_i}}})=0& \\& \\
(T_j-\eta^r)(T_j+1)=0& \textrm{for } j=1,...,n_i-1.
\end{array}$$
The following is a direct consequence of  \cite[Theorem 6.7.2]{book}.
\begin{Prop}\label{conse} Under the above hypothesis, there exists a set $\Phi^{l_i}_{n_i}({\bf s}^i)  \subset \Pi^{l_i}_{n_i}$ with 
$$| \Phi^{l_i}_{n_i} ({\bf s}^i)  |=|\operatorname{Irr} (  K(\eta)  \mathcal{H}_{n_i}^{{\bf m}^i,r})|$$
such that the following property is satisfied: For  any  $M\in \operatorname{Irr} (K(\eta)  \mathcal{H}_{n_i}^{{\bf m}^i,r})$, 
 there exists a unique $l_i$-partition $\blambda_M \in\Phi^{l_i}_{n_i} ({\bf s}^i) $ such that
 \begin{itemize}
\item $[V^{\blambda_M}:M]=1$ \text{ and }  \smallbreak
\item  $[V^{\blambda}:M]\neq 0$ for $\blambda \in\Pi^{l_i}_{n_i}$ only if  $\kappa_{{\bf m}^i}(\blambda_M)\triangleright  \kappa_{{\bf m}^i}(\blambda) $ or $\blambda=\blambda_M$.
\end{itemize}
\end{Prop}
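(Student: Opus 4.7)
The plan is to derive Proposition \ref{conse} as a direct application of \cite[Theorem 6.7.2]{book}, after recognising $K(\eta)\mathcal{H}_{n_i}^{{\bf m}^i,r}$, via the algebra isomorphism indicated just above the statement, as a specialised Ariki-Koike algebra to which that theorem applies in its standard form.

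First, I would make the isomorphism explicit: the rescaling $T_0 \mapsto (\eta_l^{i_1}\eta^{r_{i_1}})^{-1}T_0$ preserves the braid relations and identifies $K(\eta)\mathcal{H}_{n_i}^{{\bf m}^i,r}$ with the Ariki-Koike algebra whose $T_0$-polynomial has roots $\eta^{rs_j}$, $j=1,\ldots,l_i$, and whose other generators satisfy $(T_j-\eta^r)(T_j+1)=0$. This exhibits the algebra as a specialisation of a cyclotomic Ariki-Koike algebra of type $G(l_i,1,n_i)$ whose integer charge vector is ${\bf s}^i$ (with the $\eta_{l_i}^{j-1}$ factors absorbed into an auxiliary rational charge determined by the shifts of \S\ref{s}).

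Next, I would invoke \cite[Theorem 6.7.2]{book} for this cyclotomic Ariki-Koike algebra: the theorem produces a canonical basic set $\Phi^{l_i}_{n_i}({\bf s}^i)\subset \Pi^{l_i}_{n_i}$ of the required cardinality $|\operatorname{Irr}(K(\eta)\mathcal{H}_{n_i}^{{\bf m}^i,r})|$, together with the bijection $M \mapsto \blambda_M$, the identity $[V^{\blambda_M}:M]=1$, and a triangularity statement $[V^{\blambda}:M]\neq 0 \Rightarrow \kappa(\blambda_M)\rhd \kappa(\blambda)$ or $\blambda=\blambda_M$, where $\kappa$ denotes the shifted symbol used in the cited theorem.

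The remaining step, which I expect to be the main technical point, is to identify the dominance order furnished by the cited theorem with the one built from $\kappa_{{\bf m}^i}$ in the statement of Proposition \ref{conse}. Relation (\ref{rela}) makes the discrepancy between ${\bf m}^i$ and ${\bf s}^i$ explicit, reducing to a global translation plus a $\blambda$-independent component-wise correction of order $e/(klr)$. Combined with Proposition \ref{computatin} and the fact that the ${\bf a}$-function is an algebra invariant up to a common additive constant, this matches the two triangularity conditions on $l_i$-partitions of rank $n_i$, yielding Proposition \ref{conse}.
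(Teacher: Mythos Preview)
Your overall strategy matches the paper's: invoke \cite[Theorem 6.7.2]{book} for the algebra in its cyclotomic form, then transfer the resulting $\kappa$-dominance triangularity to the one built from ${\bf m}^i$ via relation~(\ref{rela}). That is exactly what the paper does.

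The final step, however, is where your proposal goes astray. You describe the discrepancy between the charge for which Theorem 6.7.2 applies and ${\bf m}^i$ as ``a global translation plus a $\blambda$-independent component-wise correction,'' and then plan to reconcile the two dominance orders by passing through the ${\bf a}$-function, using Proposition~\ref{computatin} and the invariance of ${\bf a}$ under rescaling. This does not work: Proposition~\ref{computatin} is a one-way implication ($\kappa$-dominance $\Rightarrow$ ${\bf a}$-inequality), so knowing that the two parameter choices yield the same ${\bf a}$-function does not let you conclude that they yield the same $\kappa$-dominance order, which is what the statement of Proposition~\ref{conse} requires.

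The paper's argument is both simpler and avoids this gap. Setting $\underline{m}_{i_j}:=s_j-ei_j/(klr)$ (the natural cyclotomic charge for Theorem 6.7.2), relation~(\ref{rela}) gives
\[
\underline{m}_{i_j}=m_{i_j}-m_{i_1}+\underline{m}_{i_1},
\]
so $\underline{\bf m}^i$ and ${\bf m}^i$ differ by a \emph{single} constant, the same in every coordinate. Consequently $\kappa_{\underline{\bf m}^i}(\blambda)$ and $\kappa_{{\bf m}^i}(\blambda)$ differ by adding that constant to every entry, and the dominance orders coincide on the nose. No appeal to the ${\bf a}$-function is needed.
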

\begin{proof}
For $j=1,2,\ldots,l_i$, set $\underline{m}_{i_j}:=s_{j}-ei_j/(klr)$ and $\underline{\bf m}^i:=(\underline{m}_{i_1},\ldots,\underline{m}_{i_{l_i}})$. By \cite[Theorem 6.7.2]{book}, there exists 
 a set $\Phi^{l_i}_{n_i}  \subset \Pi^{l_i}_{n_i}$  satisfying the property of the proposition except that 
   $\kappa_{{\bf m}^i}$ is replaced by  $\kappa_{\underline{\bf m}^i}$. By Equality (\ref{rela}), we have
   $$\underline{m}_{i_j}=m_{i_j}-m_{i_1}+\underline{m}_{i_1}.$$ 
   It easily follows that 
   $ \kappa_{{\bf m}^i}(\blambda_M)\triangleright  \kappa_{{\bf m}^i}(\blambda) $ if and only if  $\kappa_{\underline{\bf m}^i}(\blambda_M)\triangleright  \kappa_{\underline{\bf m}^i}(\blambda)$, which yields the desired result. 
\end{proof}
\end{abs}
\begin{abs}
We now need an easy combinatorial lemma. In this section, all
 multisets of rational numbers are ordered so that their elements form decreasing sequences. 
Moreover, if $X$ and $Y$ are multisets, we will write $X \sqcup Y$ for the multiset consisting of all the elements of $X$ and $Y$ together, so that $|X \sqcup Y|=|X|+|Y|$. 

\begin{Lem}\label{Combi}
Let $\mu$ and $\nu$ be two multisets of positive rational numbers. Assume that there exist multisets $\mu^1,\mu^2,\ldots,\mu^h$ and $\nu^1,\nu^2,\ldots,\nu^h$ such that 
$$\mu=\bigsqcup_{i=1}^h \mu^i,\,\,\,\nu=\bigsqcup_{i=1}^h \nu^i\,\,\,\text{ and }\,\,\,
\mu^i \trianglerighteq \nu^i\text{ for all } i=1,\ldots,h.$$
Then $\mu \trianglerighteq \nu$ (with the equality holding only when  $\mu^i=\nu^i$ for all $i=1,\ldots,h$).
\end{Lem}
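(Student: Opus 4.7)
The plan is to establish the partial-sum inequality $\sum_{j=1}^t \mu_j \geq \sum_{j=1}^t \nu_j$ for every $t \geq 1$ by a greedy decomposition argument, and then to handle the equality clause by an induction on the largest entry appearing in $\mu$.

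For the main inequality, I fix $t \geq 1$ and record, for each $i$, the number $t_i$ of entries of $\nu^i$ that appear among the top $t$ entries of $\nu$ (breaking ties in any consistent way). Then $\sum_{i=1}^h t_i = t$ and
$$\sum_{j=1}^t \nu_j = \sum_{i=1}^h \sigma_i,$$
where $\sigma_i$ denotes the sum of the top $t_i$ entries of $\nu^i$. The hypothesis $\mu^i \trianglerighteq \nu^i$, applied at level $t_i$ (padding by zeros if necessary so that the partial-sum comparison is well-defined), bounds $\sigma_i$ above by the sum of the top $t_i$ entries of $\mu^i$. Summing these bounds over $i$ produces a sub-multiset of $\mu$ of cardinality exactly $t$, whose total sum is in turn at most $\sum_{j=1}^t \mu_j$. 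Chaining these inequalities gives the desired bound, and since $t$ is arbitrary we conclude $\mu \trianglerighteq \nu$.

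For the equality clause, suppose $\mu = \nu$ as multisets; I argue by induction on the largest entry $M$ of $\mu$ that each $\mu^i = \nu^i$. Let $m_i$ and $n_i$ denote the multiplicities of $M$ in $\mu^i$ and $\nu^i$ respectively. The hypothesis $\mu^i \trianglerighteq \nu^i$ taken at $t = n_i$ forces the top $n_i$ entries of $\mu^i$ to sum to at least $n_i M$; since $M$ is the global maximum, this can happen only if those top entries of $\mu^i$ are each equal to $M$, so $m_i \geq n_i$. As $\sum_i m_i = \sum_i n_i$ is the total multiplicity of $M$ in $\mu=\nu$, we must have $m_i = n_i$ for every $i$. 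Stripping all copies of $M$ from each $\mu^i$ and $\nu^i$ preserves both the dominance hypotheses and the union equality, and an induction on $M$ then concludes.

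The creative step is the greedy decomposition $t = \sum_i t_i$ of the top $t$ entries of $\nu$ along the $\nu^i$; once that is in place the inequality becomes a straightforward application of the partial-sum characterization of dominance. The mildly delicate point is the tie-breaking inside this decomposition, but any consistent rule works, so I do not expect it to be a genuine obstacle. The equality clause is similarly routine once one notices the forcing of the top multiplicities by dominance.
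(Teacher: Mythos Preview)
Your proof is correct and rests on the same greedy idea as the paper's: split the index $t$ as $t=\sum_i t_i$ along the pieces and apply dominance piecewise. The execution differs, though. The paper proceeds by induction on $h$; in the base case $h=2$ it decomposes the top $t$ entries of \emph{both} $\mu$ and $\nu$ (obtaining $(t_1,t_2)$ for $\mu$ and $(t_1',t_2')$ for $\nu$) and then compares the two decompositions by a case distinction on whether $t_1\geq t_1'$. Your version is more direct: you decompose only along $\nu$, apply $\mu^i\trianglerighteq\nu^i$ at level $t_i$, and finish with the observation that any size-$t$ sub-multiset of $\mu$ has total at most $\sum_{j\leq t}\mu_j$. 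This handles all $h$ at once and avoids the case analysis. You also supply an argument for the equality clause, which the paper's proof omits entirely; your multiplicity-stripping induction is sound, though it is really an induction on the number of distinct values (or on $|\mu|$) rather than on the rational number $M$ itself.
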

\begin{proof} If $h=1$, there is nothing to prove. Suppose that $h=2$, and let $t \geq 1$. We have $\sum_{1\leq j \leq t} \mu_j = \sum_{1\leq j \leq t} \mu_j^1
+\sum_{1\leq j \leq t_2} \mu_j^2$ for some $t_1, t_2 \geq 1$ such that $t_1+t_2=t$,
and $\sum_{j=1}^t \nu_j = \sum_{1\leq j \leq t_1 '} \nu_j^1
+\sum_{1\leq j \leq t_2'} \nu_j^2$ for some $t_1', t_2' \geq 1$ such that $t_1'+t_2'=t$.
Suppose that $t_1 \geq t_1'$. Then $t_2 \leq t_2'$, and we have
$$ \sum_{1\leq j \leq t}\mu_j=\sum_{1\leq j \leq t_1 '} \mu_j^1+\sum_{t_1'+1\leq j \leq t_1} \mu_j^1+
\sum_{1\leq j \leq t_2} \mu_j^2 $$
Now, 
$$\begin{array}{rcl}
\displaystyle \sum_{1\leq j \leq t_1 '} \mu_j^1+\sum_{t_1'+1\leq j \leq t_1} \mu_j^1+
\displaystyle\sum_{1\leq j \leq t_2} \mu_j^2 &\geq& \displaystyle\sum_{1\leq j t_1'} \mu_j^1+\sum_{t_2+1\leq j \leq  t_2'} \mu_j^2+
\displaystyle\sum_{1\leq j \leq t_2} \mu_j^2 \\ 
&\geq& \displaystyle \sum_{1\leq j \leq t_1'} \nu_j^1
+\sum_{1 \leq j \leq t_2'} \nu_j^2. \end{array} $$
and we can conclude because 
$$ \displaystyle \sum_{1\leq j \leq t_1'} \nu_j^1
+\sum_{1 \leq j \leq t_2'} \nu_j^2 =\sum_{1\leq j \leq t} \nu_j$$
Induction yields the result for $h>2$.
\end{proof}
\end{abs}
\begin{abs}
We are now in position to prove the main result of this section:
\begin{Th}\label{main}
In the setting of \S \ref{set}, 
the algebra $\mathcal{H}_n^{{\bf m},r}$ admits a canonical basic set $\mathcal{B}_{\theta}$ with respect to any specialisation  $\theta : \mathbb{Z}[\eta_l][q^{\pm 1}] \to K(\eta) $
such that $\theta (q)=\eta\in \mathbb{C}^*$, i.e., 
 there exists a set $\mathcal{B}_{\theta}  \subset \Pi^{l}_{n}$ with
$$| \mathcal{B}_{\theta}   |=|\operatorname{Irr} (  K(\eta)  \mathcal{H}_{n}^{{\bf m},r})|$$
such that the following property is satisfied: For  any  $M\in \operatorname{Irr} ( K(\eta)  \mathcal{H}_{n}^{{\bf m},r})$, 
 there exists a unique $l$-partition $\blambda_M \in\mathcal{B}_{\theta}  $ such that  
  \begin{itemize}
\item $[V^{\blambda_M}:M]=1$ \text{ and }  \smallbreak
\item  $[V^{\blambda}:M]\neq 0$ for $\blambda \in\Pi^{l}_{n}$ only if  ${\bf a}^{({\bf m},r)}(\blambda) > {\bf a}^{({\bf m},r)}(\blambda_M)$ or $\blambda=\blambda_M$.
\end{itemize}
In addition, we have that $\blambda\in \mathcal{B}_{\theta}$ if and only if there exist integers $n_1,\ldots,n_p \geq 0$ with $n_1+\cdots+n_p=n$ such that, for all $i=1,\ldots,p$,
$\pi_i (\blambda)\in \Phi_{n_i}^{l_i} ({\bf s}^i)$.
\end{Th}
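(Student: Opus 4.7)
The plan is to construct $\mathcal{B}_\theta$ explicitly by gluing together the basic sets provided by Proposition \ref{conse} along the Dipper--Mathas decomposition, and to transfer the dominance-of-symbols condition across the Morita equivalence via Lemma \ref{Combi} and Proposition \ref{computatin}. Concretely, I would define $\mathcal{B}_\theta$ to be the set of all $\blambda\in\Pi^l_n$ such that, setting $n_i:=|\pi_i(\blambda)|$, one has $\pi_i(\blambda)\in\Phi_{n_i}^{l_i}({\bf s}^i)$ for every $i=1,\ldots,p$. This is exactly the characterisation appearing at the end of the theorem, so it suffices to check that this set has the required cardinality and triangularity properties.

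For the cardinality, Theorem \ref{Dimathas} together with the Morita equivalence of Dipper--Mathas identifies $\operatorname{Irr}(K(\eta)\mathcal{H}_n^{{\bf m},r})$ with tuples $(M_1,\ldots,M_p)$, where $M_i\in\operatorname{Irr}(K(\eta){\bf H}_{n_i}^{{\bf m}^i,r})$ and $n_1+\cdots+n_p=n$; summing the cardinalities from Proposition \ref{conse} over all such tuples gives exactly $|\mathcal{B}_\theta|$, because the map $\blambda\mapsto(\pi_1(\blambda),\ldots,\pi_p(\blambda))$ is bijective onto $\prod_i\Pi_{n_i}^{l_i}$ once the $n_i$ are fixed. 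Given $M\leftrightarrow(M_1,\ldots,M_p)$, I would define $\blambda_M$ to be the unique $l$-partition with $\pi_i(\blambda_M)=\blambda_{M_i}\in\Phi^{l_i}_{n_i}({\bf s}^i)$ for each $i$; then $\blambda_M\in\mathcal{B}_\theta$ by construction, and the multiplicativity in Theorem \ref{Dimathas} yields $[V^{\blambda_M}:M]=\prod_i[V^{\blambda_{M_i}}:M_i]=1$.

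For the triangularity, suppose $[V^\blambda:M]\ne 0$ with $\blambda\ne\blambda_M$. Theorem \ref{Dimathas} forces $\pi_i(\blambda)\in\Pi_{n_i}^{l_i}$ with the \emph{same} $n_i$ as for $\blambda_M$, and $[V^{\pi_i(\blambda)}:M_i]\ne 0$ for each $i$; Proposition \ref{conse} then provides, for each $i$, either $\pi_i(\blambda)=\pi_i(\blambda_M)$ or $\kappa_{{\bf m}^i}(\pi_i(\blambda_M))\rhd\kappa_{{\bf m}^i}(\pi_i(\blambda))$, with strict inequality for at least one index since $\blambda\ne\blambda_M$. The key combinatorial observation is that, choosing a common symbol size $s$ large enough for both $\blambda$ and $\blambda_M$, the entries of the shifted ${\bf m}$-symbol of any $\boldsymbol{\nu}\in\Pi_n^l$ whose component index $j$ lies in ${\bf I}_i$ are, by definition, exactly the entries of the shifted ${\bf m}^i$-symbol of $\pi_i(\boldsymbol{\nu})$; consequently
\[
\kappa_{\bf m}(\blambda)=\bigsqcup_{i=1}^p\kappa_{{\bf m}^i}(\pi_i(\blambda))\quad\text{and}\quad\kappa_{\bf m}(\blambda_M)=\bigsqcup_{i=1}^p\kappa_{{\bf m}^i}(\pi_i(\blambda_M))
\]
as multisets, with matching block sizes. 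Lemma \ref{Combi} then yields $\kappa_{\bf m}(\blambda_M)\rhd\kappa_{\bf m}(\blambda)$, and Proposition \ref{computatin} converts this into ${\bf a}^{({\bf m},r)}(\blambda)>{\bf a}^{({\bf m},r)}(\blambda_M)$, which is the desired triangularity.

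The main obstacle, and really the only non-mechanical step, is this symbol-decomposition identity: one has to pick a single $s$ for which the sub-symbols indexed by each ${\bf I}_i$ match the definition of the shifted ${\bf m}^i$-symbol used in Proposition \ref{conse}, and to ensure that the block sizes $l_is+\sum_{j\in{\bf I}_i}[m_j]$ are the same for $\blambda$ and $\blambda_M$ so that Lemma \ref{Combi} applies. Once this bookkeeping is carried out, uniqueness of $\blambda_M$ follows from the injectivity of $\blambda\mapsto(\pi_1(\blambda),\ldots,\pi_p(\blambda))$ combined with the uniqueness statement of Proposition \ref{conse}, and the final characterisation of $\mathcal{B}_\theta$ is precisely the one we used to define it.
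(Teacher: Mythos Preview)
Your proposal is correct and follows essentially the same approach as the paper: define $\mathcal{B}_\theta$ via the componentwise condition $\pi_i(\blambda)\in\Phi_{n_i}^{l_i}({\bf s}^i)$, read off cardinality and $[V^{\blambda_M}:M]=1$ from the Dipper--Mathas product formula, and for triangularity combine Proposition~\ref{conse} with the multiset identity $\kappa_{\bf m}(\blambda)=\bigsqcup_i\kappa_{{\bf m}^i}(\pi_i(\blambda))$, Lemma~\ref{Combi}, and Proposition~\ref{computatin}. The only difference is cosmetic: you are more explicit than the paper about the bookkeeping needed (choosing a common symbol size $s$ so that the sub-symbol sizes match and the entries are positive), which the paper absorbs into the phrase ``by the definition of $\kappa_{\bf m}$''.
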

\begin{proof}
Let 
$${B}_{\theta}= \{\blambda\in \Pi^l_n\,|\, \exists \,n_1,\ldots,n_p  \geq 0,
n_1+\cdots+n_p=n:
 \forall i \in [1,p],\ \pi_i (\blambda)\in \Phi_{n_i}^{l_i} ({\bf s}^i)\}.$$  First note that, by 
 \S \ref{morita}, we have 
 $$| \mathcal{B}_{\theta} |=|\operatorname{Irr} (  K(\eta)  \mathcal{H}_{n}^{{\bf m},r})|.$$

Let  $\blambda \in \Pi^l_n$ and $M\in \operatorname{Irr} ( K(\eta)  \mathcal{H}_{n}^{{\bf m},r})$. By Proposition \ref{Dimathas}, 
 there exist integers $n_1,\ldots,n_p \geq 0$ with $n_1+\cdots+n_p=n$, and
$M_i\in \operatorname{Irr} ( K(\eta)  \mathcal{H}_{  n_i   }^{{\bf m}^i,r})$
such that 
\begin{equation}\label{eq2}
 [V^{\blambda}:M]=
 \left\{\begin{array}{ll}
 \prod_{1\leq i \leq  p} [V^{\pi_{i}(\blambda)}:M_i],& \text{ if }\, \pi_{i}(\blambda) \in \Pi_{n_i}^{l_i}\,\,
 \forall i \in [1,p]
 \\ &\\
 0,& \text{ otherwise.}
  \end{array}\right.
\end{equation} 
  
  We consider the $l$-partition $\blambda_M\in \Pi^l_n$ such that $\pi_i (\blambda_M)=\blambda_{M_i}$ for all $i=1,\ldots,p$, where $\blambda_{M_i}\in \Phi^{l_i}_{n_i} ({\bf s}^i)$
  is defined in Proposition \ref{conse}. Note that we have $\blambda_M\in \mathcal{B}_{\theta}$ and
  $ [V^{\blambda_M}:M]=1$. 
  
  Now let $\blambda \in\Pi^l_{n}$, $\blambda \neq \blambda_M$.
 Following Equation  (\ref{eq2}) and Proposition \ref{conse}, if $[V^{\blambda}:M] \neq 0$, then, for all $i=1,\ldots,p$, 
 \begin{center}
 either \,\,$\pi_i(\blambda)=\blambda_{M_i}=\pi_i(\blambda_M)$\,\,\,\,\,or\,\,   $ \kappa_{{\bf m}^i}(\pi_i (\blambda_M))\rhd  \kappa_{{\bf m}^i}(\pi_i(\blambda))$. 
  \end{center}
By the definition of $\kappa_{\bf m}$, we have $ \kappa_{{\bf m}} (\blambda)= \bigsqcup_{i=1}^p
 \kappa_{{\bf m}^i}(\pi_i (\blambda))$. By Lemma 
  \ref{Combi}, since $\blambda \neq \blambda_M$, we must have $  \kappa_{{\bf m}} (\blambda_M)\rhd  \kappa_{{\bf m}}(\blambda)$.
  The result follows now from Proposition \ref{computatin}.
\end{proof}
\begin{Rem}
If $r=1$, then the elements of $\mathcal{B}_{\theta}$ are the $e$-\emph{Uglov $l$-partitions of $n$}.
(cf.~\cite[Definition 3.2]{jaca}). For $r>1$, we will refer to the elements of $\mathcal{B}_{\theta}$ as \emph{generalised $e$-Uglov $l$-partitions of $n$}.
\end{Rem}
\end{abs}

\begin{abs}
Let us give an example, where we calculate the canonical basic set 
when $n=2$, $r=6$ and ${\bf m} = (1/2, -1/6, -1/3)$.
Consider the cyclotomic  Ariki-Koike algebra $\mathcal{H}_{2}^{{\bf m},r}$ of type $G(3,1,2)$, with generators $T_0$, $T_1$ and relations
$$T_0T_1T_0T_1=T_1T_0T_1T_0,\,\,(T_0-q^3)(T_0-\eta_3q^{-1})(T_0-\eta_3^2q^{-2})=0,\,\,\,
(T_1-q^6)(T_1+1)=0.$$
Let $\theta: \mathbb{Z}[\eta_3][q^{\pm 1}] \rightarrow \mathbb{Q}(\eta_{12})$ be a specialisation such that $\theta(q)=\eta_{12}$ (we have $e=12$ and $k=1$). Then the specialised Ariki-Koike algebra 
$\mathbb{Q}(\eta_{12})\mathcal{H}_{2}^{{\bf m},r}$ is generated by $T_0$ and $T_1$ with relations
$$T_0T_1T_0T_1=T_1T_0T_1T_0,\,\,(T_0-i)^2(T_0+1)=0,\,\,\,
(T_1+1)^2=0.$$
By \cite[Theorem 1.1]{DiMa}, the specialised Ariki-Koike algebra 
$\mathbb{Q}(\eta_{12})  \mathcal{H}_{2}^{{\bf m},r}$ is Morita equivalent to the algebra
$$\mathbb{Q}(\eta_{12})  {\bf H}_{2}^{{\bf m}^1,r}  \oplus \left( \mathbb{Q}(\eta_{12})  {\bf H}_{1}^{{\bf m}^1,r}\otimes
\mathbb{Q}(\eta_{12}) {\bf H}_{1}^{{\bf m}^2,r} \right) \oplus \mathbb{Q}(\eta_{12})  {\bf H}_{2}^{{\bf m}^2,r},$$
where ${\bf m}^1=(1/2,-1/6)$ and ${\bf m}^2=(-1/3)$. We now have that
\begin{itemize}
\item the algebra $\mathbb{Q}(\eta_{12})  {\bf H}_{2}^{{\bf m}^1,r}$ 
is isomorphic to the cyclotomic Ariki-Koike algebra of type $G(2,1,2) \cong B_2$
with generators
$T_0$ and $T_1$, and relations
$$T_0T_1T_0T_1=T_1T_0T_1T_0,\,\,(T_0-i)^2=0,\,\,\,
(T_1+1)^2=0,$$
\item the algebra $\mathbb{Q}(\eta_{12})  {\bf H}_{1}^{{\bf m}^1,r}$
is isomorphic to the cyclotomic Ariki-Koike algebra of type $G(2,1,1) \cong \mathbb{Z}/2\mathbb{Z}$ with quadratic relation $(T_0-i)^2=0$, 
\item the algebra $\mathbb{Q}(\eta_{12})  {\bf H}_{1}^{{\bf m}^2,r}$ is isomorphic to the algebra of the trivial group, and 
\item the algebra $\mathbb{Q}(\eta_{12})  {\bf H}_{2}^{{\bf m}^2,r}$ is isomorphic to the cyclotomic Ariki-Koike algebra of type $G(1,1,2) \cong \mathfrak{S}_2$ with quadratic relation $(T_1+1)^2=0$.
\end{itemize} 
Keeping the notation of \S\ref{s} and Proposition  \ref{conse}, we obtain:
\begin{itemize}
\item $\Phi_{2}^2({\bf s}^1)= \{((2),\emptyset),\,((1),(1))\}$, \smallbreak
\item $\Phi_{1}^2({\bf s}^1)=\{((1),\emptyset)\}$,  \smallbreak
\item $\Phi_{1}^1({\bf s}^2)=\{(1)\}$,  \smallbreak
\item $\Phi_{2}^1({\bf s}^2)=\{(2)\}$.  \smallbreak
\end{itemize}
Therefore, the canonical basic set  with respect to ${\theta}$  for $\mathcal{H}_{2}^{{\bf m},r}$ is
$$\mathcal{B}_\theta=\{ ((2),\emptyset,\emptyset), ((1),(1),\emptyset), ((1),\emptyset,(1)),
(\emptyset,\emptyset,(2))\}.$$

\end{abs}

\end{section}
\begin{section}{Canonical basic sets for cyclotomic Hecke algebras of type $G(l,p,n)$}
The purpose of this last part is to deduce from the last section the existence of the explicit parametrisation of the basic sets for 
 Cyclotomic Hecke algebra of type $G(l,p,n)$.
\begin{abs}
Let $l,p,n$ be three positive integers with $n>2$ (we can also take $n=2$, but then we must assume that $p$ is odd). Set $d:=l/p$. Let $r \in \mathbb{Z}_{>0}$ and let $r_0,r_1,\ldots,r_{d-1}$ be any integers.
For all $i=0,\ldots,d-1$, we set $m_i:=r_i/(pr)$ and we define
${\bf m}:=(m_0,\ldots,m_{d-1},m_0,\ldots,m_{d-1},\ldots, m_0,\ldots,m_{d-1}) \in \mathbb{Q}^l$
(where the $d$-tuple $(m_0,\ldots,m_{d-1})$ is repeated $p$ times). We consider the cyclotomic Hecke algebra $\mathcal{H}_{p,n}^{{\bf m},pr}$ of type $G(l,p,n)$  over $\mathbb{Z}[\eta_l][q^{\pm 1}]$ with presentation as follows:
\begin{itemize}
\item generators: $t_0$, $t_1$,\ldots, $t_{n}$,
\item relations: $$\begin{array}{rl}
(t_0-q^{pr_0})(t_0-\eta_d q^{pr_1})\cdots (t_0-\eta_d^{d-1} q^{pr_{d-1}})=0& \\ &\\
(t_j-q^{pr})(t_j+1)=0& \textrm{for } j=1,...,n
\end{array}$$
and the braid relations
\begin{itemize}
\item $t_1t_3t_1=t_3t_1t_3$, $t_jt_{j+1}t_j=t_{j+1}t_jt_{j+1}$ for $j=2,\ldots,n-1$,
\item $t_1t_2t_3t_1t_2t_3=t_3t_1t_2t_3t_1t_2$,
\item $t_1t_j=t_jt_1$ for $j=4,\ldots,n$,
\item $t_i t_j=t_j t_i$  for $2 \leq i <j \leq n$ with $j-i>1$,
\item $t_0t_j=t_jt_0$ for $j=3,\ldots,n$,
\item $t_0t_1t_2=t_1t_2t_0$,
\item $\underbrace{t_2t_0t_1t_2t_1t_2t_1\ldots}_{p+1 \textrm{ factors}}=
\underbrace{t_0t_1t_2t_1t_2t_1t_2\ldots}_{p+1 \textrm{ factors}}$\,.
\end{itemize}
\end{itemize}
\end{abs}

\begin{abs}\label{sigma}
Let us denote by $G$ the cyclic group of order $p$.
The algebra  $\mathcal{H}_{p,n}^{{\bf m},pr}$ can be viewed as a subalgebra of index $p$ of the cyclotomic Hecke algebra
$\mathcal{H}_{n}^{{\bf m},pr}$ of type $G(l,1,n)$: in fact, $\mathcal{H}_{n}^{{\bf m},pr}$  is a ``twisted symmetric algebra'' of $G$ over $\mathcal{H}_{p,n}^{{\bf m},pr}$ (see \cite[\S 5.5.1]{springer}).
The action of $G$ on $\mathrm{Irr}(K(q)\mathcal{H}_{n}^{{\bf m},pr})$ corresponds to the action generated by the cyclic permutation by $d$-packages on the $l$-partitions of $n$:
$$\begin{array}{rl}
\sigma: &\,\,\,\blambda=(\el^{0},\ldots,\el^{d-1},\el^{d},\ldots,\el^{2d-1},\ldots,\el^{pd-d},\ldots,\el^{pd-1})\\ &\\
\mapsto &{}^{\sigma}\blambda=(\el^{pd-d},\ldots,\el^{pd-1},\el^{0},\ldots,\el^{d-1},\ldots, \el^{pd-2d},\ldots,\el^{pd-d-1}).\smallbreak
\end{array}$$
By  \cite[Proposition 2.5]{CJ1}, we have ${\bf a}^{({\bf m},pr)}(\blambda)={\bf a}^{({\bf m},pr)}({}^\sigma\blambda)$.
\end{abs}
\begin{abs}
In this section, we will use extensively  some results known as ``Clifford theory''. For more details, the reader may refer to \cite[\S 2.3]{springer} and \cite{GenJa}. At the end, we will be able to deduce the 
 existence and the explicit parametrisation of a canonical basic set for $\mathcal{H}_{p,n}^{{\bf m},pr}$. The proof below is inspired from \cite[Proof of Theorem 3.1]{GenJa}.
From now on, we will write $\mathcal{H}$ for $\mathcal{H}_{n}^{{\bf m},pr}$ and $\bar{\mathcal{H}}$ for $\mathcal{H}_{p,n}^{{\bf m},pr}$. Let $\theta : \mathbb{Z}[\eta_l][q^{\pm 1}] \to K(\eta)$ be a specialisation such that $\theta (q)=\eta\in \mathbb{C}^*$. As before, one may assume that $\eta$ is a primitive root of unity of order $e>1$.

Let $E \in \mathrm{Irr}(K(q)\bar{\mathcal{H}})$. By Clifford theory, there exists $V^{\blambda} \in
 \mathrm{Irr}(K(q){\mathcal{H}})$ such that $E$ is a composition factor of $\mathrm{Res}_{\bar{\mathcal{H}}}^{\mathcal{H}}(V^{\blambda})$. We write $E^{\blambda}$ for $E$. Moreover, there is an action of $G$ on $\mathrm{Irr}(K(q)\bar{\mathcal{H}})$ such that, if we denote by $\bar{\Omega}_{\blambda}$ the orbit of $E^{\blambda}$ under the action of $G$, we have
 $$[\mathrm{Res}_{\bar{\mathcal{H}}}^{\mathcal{H}}(V^{\blambda})]=\sum_{E \in \bar{\Omega}_{\blambda}}[E].$$
 Let $V \in  \mathrm{Irr}(K(q){\mathcal{H}})$.
 The elements of $\bar{\Omega}_{\blambda}$ appear as composition factors in
 $\mathrm{Res}_{\bar{\mathcal{H}}}^{\mathcal{H}}(V)$ if and only if $V={}^gV^{\blambda}$ for some
 $g \in G$.
 In particular, if $\sigma$ is the map defined in \S \ref{sigma}, we have
 $$[\mathrm{Res}_{\bar{\mathcal{H}}}^{\mathcal{H}}({}^\sigma V^{\blambda})]=[\mathrm{Res}_{\bar{\mathcal{H}}}^{\mathcal{H}}(V^{\blambda})].$$ 
We deduce that
$$\mathrm{Irr}(K(q)\bar{\mathcal{H}})=\{ E\,|\,E \in  \bar{\Omega}_{\blambda}, \, \blambda \in \Pi^l_n\}.$$
 
 Now, if we denote by $\Omega_{\blambda}$ the orbit of $V^{\blambda}$ under the action of $G$, we have
 $$|\Omega_{\blambda}| |\bar{\Omega}_{\blambda}|=|G|=p.$$
 (see \cite[Lemma 2.2]{GenJa}). Thus, $|\bar{\Omega}_{\blambda}|=|G_{\blambda}|$, where  $G_{\blambda}: =\{g \in G\,|\,{}^g\blambda=\blambda\}$.
 Furthermore, applying the restriction functor $\mathrm{Res}_{\bar{\mathcal{H}}}^{\mathcal{H}}$ does not affect the ${\bf a}$-value of simple modules over $K(q)$
 (see \cite[Proposition 2.3.15]{springer}).
Hence, we obtain:
\begin{equation}\label{aMN}
{\bf a}^{({\bf m},pr)}(\blambda)={\bf a}^{({\bf m},pr)}({}^\sigma\blambda)={\bf a}^{({\bf m},pr)}(E),\text{ for all } E \in \bar{\Omega}_{\blambda}.
\end{equation}

Now, to each simple $K(\eta)\mathcal{H}$-module $M$, one can attach an ${\bf a}$-value as follows:
$$ {\bf a}^{({\bf m},pr)}(M)=\mathrm{min}\{{\bf a}^{({\bf m},pr)}(\blambda)\,\,|\,\,
[V^{\blambda}:M] \neq 0 \}.$$
Respectively, to each simple $K(\eta)\bar{\mathcal{H}}$-module $N$, one can attach an ${\bf a}$-value as follows:
$$ {\bf a}^{({\bf m},pr)}(N)=\mathrm{min}\{{\bf a}^{({\bf m},pr)}(E)\,\,|\,\,E \in \mathrm{Irr}(K(q)\bar{\mathcal{H}}),\,\,
[E:N] \neq 0 \}.$$

Let $N \in \mathrm{Irr}(K(\eta)\bar{\mathcal{H}})$. By Clifford theory, there exists $M \in
 \mathrm{Irr}(K(\eta){\mathcal{H}})$ such that $N$ is a composition factor of $\mathrm{Res}_{\bar{\mathcal{H}}}^{\mathcal{H}}(M)$. We write $N_M$ for $N$. There is an action of $G$ on $\mathrm{Irr}(K(\eta)\bar{\mathcal{H}})$ such that, if we denote by $\bar{\omega}_{M}$ the orbit of $N_M$ under the action of $G$, we have
 $$[\mathrm{Res}_{\bar{\mathcal{H}}}^{\mathcal{H}}(M)]=\sum_{N \in \bar{\omega}_{M}}[N].$$
By Theorem \ref{main}, the algebra $\mathcal{H}$ admits a canonical basic set $\mathcal{B}_{\theta}$ with respect to $\theta$. Thus, there exists $\blambda_M \in  \mathcal{B}_{\theta}$ such that the conditions of Theorem \ref{main} are satisfied. 
By \cite[Proposition 3.2]{CJ1}, we also have
${}^{\sigma}\blambda_M \in \mathcal{B}_{\theta}$. Therefore, there exists
${}^{\sigma} M \in  \mathrm{Irr}(K(\eta){\mathcal{H}})$ such that ${{}^\sigma \blambda_M}=\blambda_{{}^\sigma M } \in \mathcal{B}_{\theta}$. This action of $G$ on $\mathrm{Irr}(K(\eta){\mathcal{H}})$ agrees with the action on  $\mathrm{Irr}(K(\eta)\bar{\mathcal{H}})$, that is
$$[\mathrm{Res}_{\bar{\mathcal{H}}}^{\mathcal{H}}({}^\sigma M)]=
[\mathrm{Res}_{\bar{\mathcal{H}}}^{\mathcal{H}}(M)]=\sum_{N \in \bar{\omega}_{M}}[N].$$
 Let $L \in  \mathrm{Irr}(K(\eta){\mathcal{H}})$.
 The elements of $\bar{\omega}_{M}$ appear as composition factors in
 $\mathrm{Res}_{\bar{\mathcal{H}}}^{\mathcal{H}}(L)$ if and only if $L={}^gM$ for some
 $g \in G$.

By definition of $\mathcal{B}_{\theta}$, we get
$$ {\bf a}^{({\bf m},pr)}(M)= {\bf a}^{({\bf m},pr)}(\blambda_M)=
{\bf a}^{({\bf m},pr)}({}^\sigma \blambda_M)= {\bf a}^{({\bf m},pr)}({}^\sigma M)
$$
and
$$[V^{\blambda_M}]=[M] + \sum_{ {\bf a}^{({\bf m},pr)}(L)< {\bf a}^{({\bf m},pr)}(M)} [V^{\blambda_M} : L] [L].$$
By definition of the ${\bf a}$-function and Equation (\ref{aMN}), we get
$${\bf a}^{({\bf m},pr)}(M)={\bf a}^{({\bf m},pr)}(N),\text{ for all } N \in \bar{\omega}_M.$$
Moreover, if $L$ is a simple $K(\eta){\mathcal{H}}$-module such that $[V^{\blambda_M} : L] \neq 0$ and ${\bf a}^{({\bf m},pr)}(L)< {\bf a}^{({\bf m},pr)}(M)$, and $N' \in \mathrm{Irr}(K(\eta)\bar{{\mathcal{H}}})$
 is a composition factor of $\mathrm{Res}_{\bar{\mathcal{H}}}^{\mathcal{H}}(L)$, then
$${\bf a}^{({\bf m},pr)}(M) > {\bf a}^{({\bf m},pr)}(N').$$
We deduce that
$$
[\mathrm{Res}_{\bar{\mathcal{H}}}^{\mathcal{H}}(V^{\blambda_M})]=[\mathrm{Res}_{\bar{\mathcal{H}}}^{\mathcal{H}}(M)] +
\left(\begin{array}{c}
 \text{ sum of classes of simple modules with}\\ \text{{\bf a}-value strictly less than }
  {\bf a}^{({\bf m},pr)}(M)\end{array}\right),
$$whence we obtain
$$\sum_{E \in \bar{\Omega}_{\blambda_M}}[E]=\sum_{N \in \bar{\omega}_{M}}[N]+
\left(\begin{array}{c}
 \text{ sum of classes of simple modules with}\\ \text{{\bf a}-value strictly less than }
  {\bf a}^{({\bf m},pr)}(M)\end{array}\right).$$

Suppose that $N_M$ is a composition factor of $E^{\blambda_M}$. Then
${}^\sigma N_M$ is a composition factor of ${}^\sigma E^{\blambda_M}$, and, in general, ${}^g N_M$ is a composition factor of ${}^g E^{\blambda_M}$, for all $g \in G$. This is possible only if $|\bar{\omega}_{M}|=|\bar{\Omega}_{\blambda_M}|=|G_{\blambda_M}|$. For $g,\,h \in G_{\blambda_M}$, we get
$$[{}^g E^{\blambda_M}: {}^{h}N_M]=\left\{\begin{array}{ll}
 1, & \text{if }   g=h  \\ &\\
 0, & \text{otherwise.} \end{array}\right.$$
 Hence, we have
 $$[{}^g E^{\blambda_M}] =[{}^g N_M]+
\left(\begin{array}{c}
 \text{ sum of classes of simple modules with}\\ \text{{\bf a}-value strictly less than }
  {\bf a}^{({\bf m},pr)}({}^g N_M)\end{array}\right).$$
Thus, we have proved 
 the following result:

\begin{Th}\label{final}
The algebra $\bar{\mathcal{H}}$ admits a canonical basic set $\bar{\mathcal{B}}_\theta$ with respect to any specialisation  $\theta : \mathbb{Z}[\eta_l][q^{\pm 1}] \to K(\eta) $
such that $\theta (q)=\eta\in \mathbb{C}^*$, i.e., 
 there exists a set $\bar{\mathcal{B}}_{\theta}  \subset  \mathrm{Irr}(K(q)\bar{\mathcal{H}})$ with
$$| \bar{\mathcal{B}}_{\theta}   |=|\operatorname{Irr} (  K(\eta) \bar{ \mathcal{H}}|$$
such that the following property is satisfied: For  any  $N\in \operatorname{Irr} ( K(\eta)  \bar{\mathcal{H}})$, 
there exists a unique $E_N \in \bar{\mathcal{B}}_{\theta} $ such that  
  \begin{itemize}
\item $[E_N:N]=1$ \text{ and }  \smallbreak
\item  $[E:N]\neq 0$ for $E \in \mathrm{Irr}(K(q)\bar{\mathcal{H}})$ only if  ${\bf a}^{({\bf m},pr)}(E) > {\bf a}^{({\bf m},pr)}(E_N)$ or $E=E_N$.
\end{itemize}
In addition, we have that $E \in \bar{\mathcal{B}}_{\theta} $ if and only if there exists
 $\blambda\in \mathcal{B}_{\theta}\subset \Pi_n^l $ such that $E \in \bar{\Omega}_{\blambda}$.
\end{Th}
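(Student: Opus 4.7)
The plan is to transfer the canonical basic set $\mathcal{B}_\theta$ for $\mathcal{H}$ (furnished by Theorem \ref{main}) across the index-$p$ inclusion $\bar{\mathcal{H}} \subset \mathcal{H}$ by means of Clifford theory, as developed in \cite[\S 2.3]{springer} and \cite{GenJa}. The candidate set is $\bar{\mathcal{B}}_\theta = \{E \in \bar{\Omega}_{\blambda} \mid \blambda \in \mathcal{B}_\theta\}$, and the content of the theorem is that this set has the correct cardinality and exhibits the required triangular behaviour with respect to the $\mathbf{a}$-function.

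Starting from $N \in \operatorname{Irr}(K(\eta)\bar{\mathcal{H}})$, I would choose $M \in \operatorname{Irr}(K(\eta)\mathcal{H})$ such that $N = N_M$ is a composition factor of $\mathrm{Res}_{\bar{\mathcal{H}}}^{\mathcal{H}}(M)$. Applying Theorem \ref{main} yields a unique $\blambda_M \in \mathcal{B}_\theta$ with $[V^{\blambda_M}:M] = 1$ and all further composition factors $L$ of $V^{\blambda_M}$ satisfying $\mathbf{a}^{({\bf m},pr)}(L) < \mathbf{a}^{({\bf m},pr)}(M)$. I would then restrict this identity in the Grothendieck group of $\mathcal{H}$ down to $\bar{\mathcal{H}}$ and apply Clifford's theorem to both sides; combined with the facts that restriction preserves $\mathbf{a}$ (\cite[Proposition 2.3.15]{springer}) and that $\mathbf{a}$ is $\sigma$-invariant (\cite[Proposition 2.5]{CJ1}), this would yield
$$\sum_{E \in \bar{\Omega}_{\blambda_M}} [E] = \sum_{N' \in \bar{\omega}_M} [N'] + \bigl(\text{terms of strictly smaller } \mathbf{a}\text{-value}\bigr).$$

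The key intermediate step is to match the two orbits above. Because $\sigma$ stabilises $\mathcal{B}_\theta$ (by \cite[Proposition 3.2]{CJ1}) and the bijection $M \mapsto \blambda_M$ is $G$-equivariant, the stabilisers $G_{\blambda_M}$ and $G_M$ coincide, and consequently $|\bar{\Omega}_{\blambda_M}| = |\bar{\omega}_M|$. The displayed identity then forces a bijection between the two orbits under which composition multiplicities pair up as $[{}^{g}E^{\blambda_M} : {}^{h} N_M] = \delta_{g,h}$ for $g,h \in G_{\blambda_M}$; this yields the two bullet-point properties for each $E = {}^{g}E^{\blambda_M} \in \bar{\mathcal{B}}_\theta$, with the ``lower $\mathbf{a}$-value'' tail inherited from the tail in Theorem \ref{main}. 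The count $|\bar{\mathcal{B}}_\theta| = |\operatorname{Irr}(K(\eta)\bar{\mathcal{H}})|$ then drops out by summing orbit sizes on both sides of the identification.

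The main obstacle I foresee is the orbit-matching step: verifying that the \emph{same} element $g \in G$ governs both the characteristic-zero action on $\bar{\Omega}_{\blambda_M}$ and the modular action on $\bar{\omega}_M$, so that the pairing $[{}^{g}E^{\blambda_M} : {}^{h} N_M] = \delta_{g,h}$ is canonical and not merely correct up to a permutation of each orbit. This forces one to track carefully how the cyclic permutation $\sigma$ of $d$-packages interacts with the Clifford-theoretic $G$-action on both $\operatorname{Irr}(K(q)\bar{\mathcal{H}})$ and $\operatorname{Irr}(K(\eta)\bar{\mathcal{H}})$, and is the point where \cite[Proposition 3.2]{CJ1} together with the Dipper--Mathas Morita reduction really pull their weight.
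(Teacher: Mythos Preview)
Your proposal is correct and follows essentially the same route as the paper: both arguments restrict the triangular decomposition $[V^{\blambda_M}] = [M] + (\text{lower terms})$ from $\mathcal{H}$ to $\bar{\mathcal{H}}$ via Clifford theory, invoke \cite[Proposition 2.3.15]{springer} and \cite[Propositions 2.5, 3.2]{CJ1} to control $\mathbf{a}$-values and the $\sigma$-stability of $\mathcal{B}_\theta$, and then match the orbits $\bar{\Omega}_{\blambda_M}$ and $\bar{\omega}_M$ to extract $[{}^{g}E^{\blambda_M}:{}^{h}N_M]=\delta_{g,h}$. The obstacle you single out---the compatibility of the $G$-actions needed for the orbit pairing to be canonical---is precisely the point the paper handles most carefully, so your diagnosis is on target.
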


\begin{Rem}
In this section, we have also shown 
that the assumptions of \cite[Theorem 3.1]{GenJa}, which yields the existence of canonical basic sets for cyclotomic Hecke algebras of type $G(l,p,n)$, are satisfied for any choice of $\bar{\mathcal{H}}$.
\end{Rem}

\end{abs}

\begin{abs}
Let us give an example where we will apply Theorem \ref{final} in the case where $G(l,p,n)=G(3,3,2) \cong \mathfrak{S}_3$.\footnote{Of course, there is an easier way to deal with this case, but we simply want to illustrate the use of Theorem \ref{final} in a small example.}  Note that we have $d=1$, thus we can take ${\bf m}=(0,0,0)$. Let $r=2$ and consider the cyclotomic Hecke algebra 
$\mathcal{H}_{3,2}^{{\bf m},6}$ of type $G(3,3,2)$, with generators $t_1$, $t_2$ and relations
$$t_2t_1t_2=t_1t_2t_1,\,\,(t_1-q^6)(t_1+1)=(t_2-q^6)(t_2+1)=0.$$
The algebra $\mathcal{H}_{3,2}^{{\bf m},6}$ is a subalgebra of index $3$ of the cyclotomic Hecke algebra
$\mathcal{H}_{2}^{{\bf m},6}$ of type $G(3,1,2)$ with generators $T_0$, $T_1$ and relations
 $$T_0T_1T_0T_1=T_1T_0T_1T_0,\,\,T_0^3=1,\,\,\,
(T_1-q^6)(T_1+1)=0.$$
Let $\theta: \mathbb{Z}[\eta_3][q^{\pm 1}] \rightarrow \mathbb{Q}(\eta_{12})$ be a specialisation such that $\theta(q)=\eta_{12}$. Then the specialised Hecke algebra 
$\mathbb{Q}(\eta_{12})\mathcal{H}_{2}^{{\bf m},6}$ is generated by $T_0$ and $T_1$ with relations
$$T_0T_1T_0T_1=T_1T_0T_1T_0,\,\,T_0^3=1,\,\,\,
(T_1+1)^2=0.$$
By \cite[Theorem 1.1]{DiMa}, the specialised Hecke algebra 
$\mathbb{Q}(\eta_{12})\mathcal{H}_{2}^{{\bf m},6}$ is Morita equivalent to the algebra
$$\bigoplus_{n_1+n_2+n_3=2}\mathbb{Q}(\eta_{12})  {\bf H}_{n_1}^{{\bf m}^1,6}\otimes
\mathbb{Q}(\eta_{12})  {\bf H}_{n_2}^{{\bf m}^2,6}\otimes
\mathbb{Q}(\eta_{12})  {\bf H}_{n_3}^{{\bf m}^3,6},$$
where ${\bf m}^1={\bf m}^2={\bf m}^3=(0)$. Let $j \in \{1,2,3\}$.
The algebra $\mathbb{Q}(\eta_{12})  {\bf H}_{1}^{{\bf m}^j,6}$ is isomorphic to the algebra of the trivial group, and the algebra $\mathbb{Q}(\eta_{12})  {\bf H}_{2}^{{\bf m}^j,6}$ is isomorphic to the cyclotomic Hecke algebra of type $G(1,1,2) \cong \mathfrak{S}_2$ with quadratic relation $(T_1+1)^2=0$. Keeping the notation of \S\ref{s} and Proposition  \ref{conse}, we have
 $\Phi_{1}^1({\bf s}^j)=\{(1)\}$ and
 $\Phi_{2}^1({\bf s}^j)=\{(2)\}$.  Therefore, 
the canonical basic set  with respect to ${\theta}$  for $\mathcal{H}_{2}^{{\bf m},6}$ is
$$\mathcal{B}_\theta=\{ ((1),(1),\emptyset),\, (\emptyset,(1),(1)),\, ((1),\emptyset,(1)),\,
((2),\emptyset,\emptyset),\,(\emptyset,(2),\emptyset),\,(\emptyset,\emptyset,(2))\}.$$
Following Theorem \ref{final}, 
the canonical basic set  with respect to ${\theta}$  for $\mathcal{H}_{3,2}^{{\bf m},6}$ is
$$\bar{\mathcal{B}}_\theta=\{ E^{((1),(1),\emptyset)},\,E^{((2),\emptyset,\emptyset)}\}.$$

\end{abs}

%


\end{section}


\end{document}